\newcommand{\prob}[1]{\text{Pr}\big\{#1\big\}}
\newcommand{\expect}[1]{\mathbb{E}\big\{#1\big\}}
\newcommand{\bv}[1]{{\boldsymbol{#1} }}
\newcommand{\script}[1]{{{\cal{#1} }}}
\newcommand{\rmax}{\text{argmax}}
\newtheorem{lemma}{\textbf{Lemma}}
\newtheorem{theorem}{\textbf{Theorem}}
\newtheorem{assumption}{\textbf{Assumption}}
\newtheorem{definition}{\textbf{Definition}}
\begin{document}

\title{Fast-Convergent Learning-aided Control in Energy Harvesting Networks}
\author{ Longbo Huang\\
longbohuang@tsinghua.edu\\
IIIS, Tsinghua University  
\thanks{Longbo Huang  (http://www.iiis.tsinghua.edu.cn/$\sim$huang) is with the Institute for Theoretical Computer Science and the Institute for Interdisciplinary Information Sciences, Tsinghua University, Beijing, P. R. China. }
}


\maketitle

\begin{abstract}
In this paper, we present a novel learning-aided energy management scheme  ($\mathtt{LEM}$)  for multihop energy harvesting networks. Different from prior works on this problem, our algorithm explicitly incorporates  information learning  into system control via a step called \emph{perturbed dual learning}. $\mathtt{LEM}$ does not require any statistical information of the system dynamics for implementation, and efficiently resolves the challenging energy outage problem. We show that $\mathtt{LEM}$ achieves the near-optimal $[O(\epsilon), O(\log(1/\epsilon)^2)]$ utility-delay tradeoff with an $O(1/\epsilon^{1-c/2})$ energy buffers ($c\in(0,1)$). More interestingly,  $\mathtt{LEM}$ possesses a \emph{convergence time} of $O(1/\epsilon^{1-c/2} +1/\epsilon^c)$, which is much faster than the $\Theta(1/\epsilon)$ time of pure queue-based techniques or the $\Theta(1/\epsilon^2)$ time of approaches that rely purely on learning the system statistics. 
This fast convergence property makes $\mathtt{LEM}$ more adaptive and efficient in resource allocation in dynamic environments. 
The design and analysis of $\mathtt{LEM}$ demonstrate how system control algorithms can be augmented by  learning and what the benefits are. The methodology and algorithm can also be applied to similar problems, e.g., processing networks, where nodes require nonzero amount of contents to support their actions. 
\end{abstract} 

\section{Introduction}
Recent developments in energy harvesting technologies make it possible for wireless devices to support their functions by harvesting energy from the environment.  For example, by using solar panels \cite{solar-energy-05} \cite{solar-source-2014}, by harvesting ambient radio power \cite{enhant-mobicom09}, and by converting mechanical  vibration into energy \cite{vibrate-to-energy-01}, \cite{vibrational-source-2014}. Due to the capability in providing long lasting energy supply, the energy harvesting technology has the potential to become a promising solution to energy problems in networks formed by self-powered devices, e.g., wireless sensor networks and mobile devices.

To realize the full benefits of   energy harvesting,  algorithms must be designed to efficiently incorporate it into system control.  
In this paper, we develop an \emph{online learning-aided energy management} scheme for energy harvesting networks. Specifically, we consider a discrete stochastic network, where network links have time-varying qualities, and nodes are powered by finite capacity energy storage devices and can harvest energy from the environment. 
In each time slot, every node decides how much new workload to admit, e.g., sampled data from a field, and how much power to spend for traffic transmission (or data processing). The objective of the network is to find a joint energy management and scheduling policy, so as to maximize the aggregate traffic utility, while ensuring network stability and energy availability, i.e., the network nodes always have enough energy to support  transmission. 

There have been many previous works  on energy harvesting networks. Works \cite{power-harvesting-kansal07} and \cite{adaptive-cycling-07} consider a leaky-bucket like structure and design joint energy prediction and power management schemes for energy harvesting sensor nodes. \cite{energy-tradeoff-10} focuses on designing energy-efficient schemes that maximize the decay exponent of the queue size. \cite{control-rechargeable-twc10} develops scheduling algorithms to achieve near-optimal utility for energy harvesting networks with time-varying channels.  
\cite{power-routing-lin-infocom05} designs an energy-aware routing scheme  that  achieves optimality as the network size increases. 
\cite{huangneely-energy-ton13} proposes an online energy management and scheduling algorithm for multihop energy harvesting networks. \cite{tap-energy-ton14} considers joint compression and transmission in energy harvesting networks.  \cite{chen-energy-ton14} considers a multihop network and proposes a control scheme based on energy replenishment rate estimation. 

However, we notice that the aforementioned works either focus on scenarios where complete statistical information is given beforehand, or try to design schemes that do not require such information. Therefore, they ignore the potential benefits of utilizing information of system dynamics in control, and do not provide  interfaces for integrating  information collecting and learning techniques \cite{pattern-recognition-bishop}, e.g., sensing and data mining or machine learning, into algorithm design. 
%
In this work, we try to explicitly bring information learning into the system control framework. Specifically, we develop a learning mechanism called  \emph{perturbed dual learning} and propose a learning-aided energy management scheme ($\mathtt{LEM}$).  

$\mathtt{LEM}$ is an online control algorithm and \emph{does not require any statistical information} for implementation. Instead, it  builds an empirical distribution of the system dynamics, including network condition variation and energy availability fluctuation. Then, it learns an approximate optimal Lagrange multiplier of a carefully constructed underlying optimization problem that captures system optimality,  via a step called \emph{perturbed dual learning}. Finally, $\mathtt{LEM}$ incorporates the learned information into the system controller by augmenting the controller with the approximate multiplier. 
%
We show that $\mathtt{LEM}$ is able to achieve a near-optimal $[O(\epsilon), O(\log(\frac{1}{\epsilon})^2)]$ utility-delay tradeoff for general multihop energy harvesting networks with an $O((\frac{1}{\epsilon})^{2/3}\log(\frac{1}{\epsilon})^2)$ energy storage capacity and resolves the energy outage problem. 
Moreover, we show that by incorporating information learning, one can significantly improve the algorithm \emph{convergence time}, i.e., the time an algorithm takes to converge to its optimal operating point: $\mathtt{LEM}$ requires an $O((\frac{1}{\epsilon})^{2/3}\log(\frac{1}{\epsilon})^2)$ time for convergence, whereas existing queue-based algorithms require a $\Theta(1/\epsilon)$ time and algorithms based purely on learning the statistics require a $\Theta(1/\epsilon^2)$ time.   
This fast convergence implies that learning-aided algorithms can adapt  faster when the environment statistics changes, which indicates better robustness and higher efficiency in resource allocation. 

Learning-aided control with dual learning was first developed in \cite{huang-learning-sig-14}. In this work, we extend the results to resolve  energy outage problems  in energy harvesting networks via a perturbed version of dual learning. Intuitively speaking, perturbed dual learning learns a perturbed empirical optimal Lagrange multiplier required for ``no-underflow'' systems, where optimal multipliers must be steered  and made trackable by queues. 

Our paper is mostly related to recent works \cite{power-harvesting-kansal07}, \cite{chen-energy-ton14}, and \cite{huangneely-energy-ton13}. Specifically, both \cite{power-harvesting-kansal07} and \cite{chen-energy-ton14} try to form estimations of the harvestable energy rates and utilize the information in network control. However, they do not consider the system dynamics and do not explicitly characterize network delay performance. On the other hand, \cite{huangneely-energy-ton13} focuses on achieving long term performance guarantees without learning. Moreover, these three works do not characterize the algorithm convergence speed, which is an important metric for measuring  the efficiency of control algorithms in learning the optimal system operating point in dynamic environments. 

We summarize the main contributions as follows:  
\begin{itemize} 
\item We propose the Learning-aided Energy Management algorithm ($\mathtt{LEM}$) for multihop energy harvesting networks, and show that  $\mathtt{LEM}$ achieves a near-optimal $[O(\epsilon), O(\log(\frac{1}{\epsilon})^2)]$ utility-delay tradeoff with an $O((\frac{1}{\epsilon})^{2/3}\log(\frac{1}{\epsilon})^2)$ energy storage capacity.  
\item We show that $\mathtt{LEM}$ possesses an $O((\frac{1}{\epsilon})^{2/3}\log(\frac{1}{\epsilon})^2)$ convergence time. This convergence time is 
much faster compared to the $\Theta(\frac{1}{\epsilon})$ time of existing queue-based techniques and the $\Theta(\frac{1}{\epsilon})^2$ time required for approaches that purely rely on learning the statistics.
\item We analyze the performance of $\mathtt{LEM}$ with the \emph{augmented drift analysis} approach, which handles the interplay between learning and control and no-underflow constraints.  This analysis approach can likely find applications to other similar problems with the no-underflow constraints, e.g., processing networks \cite{jiang-spn}.  


\end{itemize}



The rest of the paper is organized as follows.  We present the system model in  Section \ref{section:model}. We  explain the algorithm design approach and present the $\mathtt{LEM}$ algorithm in Section \ref{section:alg-dual-learning} and explain the intuition. Then, we present the performance results of $\mathtt{LEM}$ in Section \ref{section:analysis}. Simulation results are provided in Section \ref{section:sim}. We conclude the paper in Section \ref{section:con}.

\vspace{-.08in}
\section{The System Model}\label{section:model}
We consider a general  multi-hop network that operates in slotted time. The network is modeled by a directed graph $\script{G}=(\script{N}, \script{L})$, where $\script{N}=\{1, 2, ..., N\}$ is the set of  nodes in the network, and $\script{L}=\{[n, m], \,\, n, m\in\script{N}\}$ is the set of communication links. We use $\script{N}_n^{(o)}$ to denote the set of nodes $b$ with $[n, b]\in\script{L}$ for each node $n$,  and use $\script{N}_n^{(in)}$ to denote the set of nodes $a$ with $[a, n]\in\script{L}$. We define $d_{\max}\triangleq\max_n(|\script{N}^{(in)}_n|, |\script{N}^{(o)}_n|)$ the maximum in-degree/out-degree that any node $n$ can have. 



\subsection{The Traffic and Utility Model}\label{subsection:rate-utility}

At every time slot, the network decides how much new workload (called packets below) destined for node $c$ to admit at node $n$. We call this traffic the \emph{commodity $c$} data and use $R^{(c)}_n(t)$ to denote the amount of new commodity $c$ data admitted. We assume that $0\leq R^{(c)}_n(t)\leq R_{\max}$ for all $n, c$ with some finite $R_{\max}>0$ at all time. 

We assume that each commodity is associated with a utility function $U^{(c)}_{n}(\overline{r}^{nc})$, where $\overline{r}^{nc}$ is the time average rate of the commodity $c$ traffic admitted into node $n$, defined as $\overline{r}^{nc}=\lim_{t\rightarrow\infty}\frac{1}{t}\sum_{\tau=0}^{t-1}\expect{R^{(c)}_n(\tau)}$.  \footnote{In this paper, we assume for clarity that all  limits exist with probability $1$.  When some limits do not exist, we can obtain similar results  by replacing limit by $\liminf$ or $\limsup$, but the results are more involved. } 
Each $U^{(c)}_{n}(r)$ function is assumed to be increasing, continuously differentiable, and concave in $r$ with a bounded first derivative and $U^{(c)}_{n}(0)=0$.  We define $\beta\triangleq\max_{n, c}(U^{(c)}_{n})'(0)$ the maximum first derivative of all utility functions. 

\subsection{The Transmission Model}
In order to deliver the admitted data to their destinations, each node needs to allocate power to the links for transmission at every time slot. To model the effect that the transmission rates typically also depend on the link conditions and that the link conditions may be time-varying, we denote $\bv{S}(t)$ the network \emph{channel state}, i.e.,  the $N$-by-$N$ matrix where the $(n, m)$ component of $\bv{S}(t)$ denotes the channel condition between nodes $n$ and $m$. 

Denote  $P_{[n, b]}(t)$ the power allocated to link $[n, b]$ at time $t$. At every time slot, if $\bv{S}(t)=s_i$, the power allocation vector $\bv{P}(t)=(P_{[n, b]}(t), [n, b] \in\script{L})$ must be chosen from some feasible power allocation set $\script{P}^{(s_i)}$. We assume that $\script{P}^{(s_i)}$ is compact for all $s_i$, and that every power vector in $\script{P}^{(s_i)}$ satisfies the constraint that for each node $n$, $0\leq \sum_{b\in\script{N}^{(o)}_n}P_{[n, b]}(t)\leq P_{\max}$ for some finite $P_{\max}>0$. We also assume that for any $\bv{P}\in\script{P}^{(s_i)}$, setting the entry $P_{[n, b]}$ to zero yields another power vector that is still in $\script{P}^{(s_i)}$. 
Given channel state $\bv{S}(t)$ and  power allocation vector $\bv{P}(t)$, the transmission rate over link $[n, b]$ is given by the rate-power function $\mu_{[n, b]}(t)=\mu_{[n, b]}(\bv{S}(t),\bv{P}(t))$. 

For each $s_i$, we  assume that the function $\mu_{[n, b]}(s_i, \bv{P})$ satisfies the following properties:  Let $\bv{P}, \bv{P}'\in\script{P}^{(s_i)}$ be such that  $\bv{P}'$ is obtained by changing any single component $P_{[n, b]}$ in $\bv{P}$ to zero. Then, (i) there exists some finite constant $\kappa>0$ that: 
\begin{eqnarray}
\mu_{[n, b]}(s_i, \bv{P})\leq \mu_{[n, b]}(s_i, \bv{P}') + \kappa P_{[n, b]}, \label{eq:rate-property1}
\end{eqnarray}
and (ii) for each link $[a, m]\neq[n, b]$, 
\begin{eqnarray}
\hspace{-.1in}\mu_{[a, m]}(s_i, \bv{P})\leq \mu_{[a, m]}(s_i, \bv{P}'). \label{eq:rate-property2}
\end{eqnarray}
These properties  can be satisfied by most rate-power functions, e.g.,  when the rate function is differentiable and has finite directional derivatives with respect to power \cite{neelyenergy}, and when link rates do not improve with increased interference. 

We assume that there exists a finite constant $\mu_{\max}$ such that $\mu_{[n, b]}(t)\leq\mu_{\max}$ for all time under any power allocation vector $\bv{P}(t)$ and any  channel state $\bv{S}(t)$. 
We use $\mu_{[n, b]}^{(c)}(t)$ to denote the rate allocated to the commodity $c$ data over link $[n, b]$ at time $t$. It can be seen that $\sum_{c}\mu_{[n, b]}^{(c)}(t)\leq\mu_{[n, b]}(t)$ for all  $[n, b]$ and for all $t$. 


\subsection{The  Energy Harvesting Model}
Each node in the network is assumed to be powered by a \emph{finite} capacity energy storage device, e.g., a battery or an ultra-capacitor \cite{opt-energy-twc10}. We model such a device with an  \emph{energy queue}. We use the energy queue size at node $n$ at time $t$, denoted by $E_n(t)$, to measure the amount of the energy stored at node $n$ at time $t$. 
Each node $n$ can observe its  current energy level $E_n(t)$. 
In any time slot $t$, the power allocation vector $\bv{P}(t)$ must satisfy the following ``energy-availability'' constraint: 
\footnote{We measure time in unit size slots, so that our power $P_{[n,b]}(t)$ has 
units of energy/slot, and $P_{[n,b]}(t) \times (1\, \text{slot})$ is the resulting energy consumption in one slot. Also,  the energy harvested at time $t$ is assumed to be available for use in time $t+1$. }
\begin{eqnarray}
\sum_{b\in\script{N}^{(o)}_n}P_{[n, b]}(t) \leq E_n(t),\quad\forall\,\, n, \label{eq:energycond}
\end{eqnarray}
i.e., the consumed power must be no more than what is available. 

Each node in the network is assumed to be capable of harvesting energy from the environment, for instance, using solar panels \cite{opt-energy-twc10} or mechanical vibration \cite{vibrational-source-2014}. 
To capture the fact that the amount of harvestable energy typically varies over time, we use $h_n(t)$ to denote the amount of harvestable energy by node $n$ at time $t$, and  denote by $\bv{h}(t)=(h_1(t), ..., h_{N}(t))$ the harvestable energy vector at time $t$, called the \emph{energy state}.  We assume that $h_n(t)\leq h_{\max}$ for all $n, t$ for some finite $h_{\max}$. 
In the following, it is convenient for us to assume that each node can decide whether or not to harvest energy in each slot.  
%
Specifically, we use $e_n(t) \in[0, h_n(t)]$ to denote the amount of energy that is actually harvested at time $t$. We will see later that under our algorithm,  $e_n(t)\neq h_n(t)$ only when the energy storage is close to full. 
%

Denote $\bv{z}(t)=(\bv{S}(t), \bv{h}(t))$. We assume that $\bv{z}(t)$ takes values in $\script{Z}=\{\bv{z}_1, ..., \bv{z}_M\}$, where $\bv{z}_m=(\bv{s}_m, \bv{h}_m)$ and is i.i.d. every time. We denote $\pi_{m}=\prob{\bv{z}(t)=\bv{z}_m}$. We also rewrite $\script{P}^{(s_i)}$ as $\script{P}^{m}$ and $\mu_{[n, b]}(s_m, \bv{P})=\mu_{[n, b]}(\bv{z}_m, \bv{P})$. This allows arbitrary correlations among the harvestable energy processes and channels dynamics. \footnote{The i.i.d. assumption is made for ease of presentation. Our results can be extended to the case when $\bv{z}(t)$ evolves according to a general finite-state Markovian.}

\subsection{Queueing Dynamics}
Let $\bv{Q}(t)=(Q^{(c)}_n(t), n, c\in\script{N})$, $t=0, 1,2, ...$ be the data queue backlog vector in the network, where $Q^{(c)}_n(t)$ is  the amount of commodity $c$ data queued at node $n$. We assume the following queueing dynamics:
\begin{eqnarray}
Q^{(c)}_n(t+1) &\leq& \big[Q^{(c)}_n(t) - \sum_{b\in\script{N}^{(o)}_n}\mu_{[n, b]}^{(c)}(t)\big]^+ \label{eq:Qdynamic}\\
&&\qquad\qquad\,\,\,+ \sum_{a\in\script{N}^{(in)}_n}\mu_{[a, n]}^{(c)}(t)+R^{(c)}_n(t),\nonumber
\end{eqnarray}
with $Q^{(c)}_n(0)=0$ for all $n, c\in\script{N}$, $Q^{(c)}_c(t)=0$ $\forall\,t$, and $[x]^+=\max[x, 0]$. The inequality in (\ref{eq:Qdynamic}) is due to the fact that some nodes may not have enough commodity $c$ packets to fill the allocated rates. 
In this paper, we say that the network is \emph{stable} if the following condition is met:
\begin{eqnarray}
\overline{\bv{Q}} \triangleq \lim_{t\rightarrow\infty}\frac{1}{t}\sum_{\tau=0}^{t-1}\sum_{n,c} \expect{Q_n^{(c)}(\tau)} <\infty.\label{eq:queuestable}
\end{eqnarray}

Similarly, let $\bv{E}(t)=(E_n(t), n\in\script{N})$ be the vector of energy queue sizes.  Due to the energy availability constraint (\ref{eq:energycond}), we see that for each node $n$, the energy queue $E_n(t)$ evolves  according to the following: 
\begin{eqnarray}
\hspace{-.3in}&&E_n(t+1) =  E_n(t) - \sum_{b\in\script{N}^{(o)}_n}P_{[n, b]}(t) +e_n(t), \label{eq:Edynamic} 
\end{eqnarray}
with $E_n(0)=0$ for all $n$. 
Note that with (\ref{eq:Edynamic}), we start by  assuming that each energy queue has infinite capacity. We will show later that under our  algorithm, a finite buffer size is sufficient for achieving the desired perfromance. 


\subsection{Utility Maximization}
The goal of the network is to design a joint flow control, routing and scheduling, and energy  management algorithm to maximize the system utility, defined as: 
\begin{eqnarray}
U_{tot}(\overline{\bv{r}}) &=& \sum_{n, c} U^{(c)}_n(\overline{r}^{nc}),\label{eq:utiility-def}
\end{eqnarray}
subject to  network stability  (\ref{eq:queuestable}) and energy availability (\ref{eq:energycond}). Here $\overline{\bv{r}}=(\overline{r}^{nc}, \forall\, n, c\in\script{N})$ is the vector of the average expected admitted rates. We also use $\bv{r}^*$ to denote an optimal rate vector that maximizes (\ref{eq:utiility-def}) subject to (\ref{eq:queuestable}) and  (\ref{eq:energycond}). 

\subsection{Discussion  of the Model}
This model is general and can be used to model  systems that are self-powered and can harvest energy, e.g., environment monitoring wireless sensor networks,  or networks formed by mobile cellular devices. 
The same model was also considered in \cite{huangneely-energy-ton13}. There, two online algorithms were developed for achieving near-optimal utility performance. 
In this work, we use a  very different approach, which \emph{explicitly incorporates  learning}  into algorithm design and \emph{explores the benefits} of historic system information. Moreover, while previous works mostly focus on long term average performance, we also investigate the algorithm convergence time, defined to be the time it takes for the algorithm (and the system) to learn the optimal operating point.

\section{Algorithm Design via Learning} \label{section:alg-dual-learning}
In this section, we present our algorithm and the design approach. To facilitate understanding, we first discuss the intuition behind our approach. Then, we provide detailed descriptions of the algorithm. 

\subsection{Design Approach}
We first consider the following optimization problem, which can be intuitively viewed as the solution to our  problem.  \footnote{Technically speaking, one has to solve a ``convexified'' version of (\ref{eq:opt-obj}) to find an optimal control policy. But (\ref{eq:opt-obj}) is sufficient for our algorithm design and analysis. } 
\begin{eqnarray}
\hspace{-.2in} \max: && \phi = V\sum_{n, c} U^{(c)}_n(r^{nc})\label{eq:opt-obj}\\
\hspace{-.2in} \text{s.t.}  &&  r^{nc}+\sum_{m}\pi_{m} \sum_{a\in\script{N}^{(in)}_n}\mu^{(c)}_{[a, n]}(\bv{z}_m, \bv{P}^{m})\label{eq:opt-rate} \\
\hspace{-.2in}&&\qquad \leq\sum_{m}\pi_{m} \sum_{b\in\script{N}^{(o)}_n}\mu^{(c)}_{[n, b]}(\bv{z}_m, \bv{P}^{m}), \forall\,\,\, (n,c) \nonumber\\
\hspace{-.2in}&& \sum_{m}\pi_{m} \sum_{b\in\script{N}^{(o)}_n} P^{m}_{[n, b]}    =  \sum_{m}\pi_{m}  e_{n}^{m},\forall n \label{eq:opt-energy}  \\
\hspace{-.2in}&&  \bv{P}^{m}\in\script{P}_{m},  \forall \bv{z}_m,  \,  0\leq r^{nc} \leq R_{\max},\forall\, (n, c)\nonumber\\
\hspace{-.2in}&&  \sum_c\mu^{(c)}_{[n, b]}(\bv{z}_m, \bv{P}^{m})\leq \mu_{[n, b]}(\bv{z}_m, \bv{P}^{m}), \,\forall\, [n, b]   \nonumber\\
\hspace{-.2in}&&0\leq e_{n}^{m} \leq h_{n}^{m},\,\,\forall\,\, n, \bv{h}_j.\nonumber
\end{eqnarray}
Here $V\geq1$ is a constant and corresponds to a control parameter of our algorithm (explained later). 
Intuitively, problem (\ref{eq:opt-obj}) computes an optimal control policy. To see this, note that we can interpret $r^{nc}$ as the traffic admission rate, $\bv{P}^{m}$ as the power allocation vector under state $\bv{z}_m$, and $\bv{e}^{m}$ as the energy harvesting decision. (\ref{eq:opt-rate}) represents the queue stability constraint  and (\ref{eq:opt-energy}) denotes the energy consumption constraint. 

In practice,  one may not always have the statistics $(\pi_m, m)$ a-prior. As a result,  online algorithms have been proposed, e.g., ESA in \cite{huangneely-energy-ton13}, \cite{chen-energy-ton14}. However,  doing so \emph{ignores the historic system information} one can accumulate over time and loses its value. In our case, we try to explicitly utilize such information and to explore its benefits. 
Specifically, we will try to build an empirical distribution for the system dynamics $\{\bv{z}_m\}_{m=1}^M$. Then, we solve a \emph{perturbed empirical} version of the dual problem of (\ref{eq:opt-obj}) to  obtain an empirical Lagrange multiplier (we call this step \emph{perturbed dual learning}). After that, we incorporate the empirical multiplier into an online system controller  (Fig. \ref{fig:lem} shows its steps).

%

\vspace{-.05in}
\subsection{Learning-aided Energy Management}
Here we present our algorithm, which consists of an online controller and a learning component. 
We first present the algorithm and then explain  the controller in Section \ref{subsection:controller}. 

For our algorithm, we need the  dual problem of (\ref{eq:opt-obj}):  
\begin{eqnarray}
\min: \,\,\, g(\bv{\upsilon}, \bv{\nu}), \quad \text{s.t.} \,\,\, \bv{\upsilon}\succeq\bv{0}, \,\, \bv{\nu}\in\mathbb{R}^N,\label{eq:dual}
\end{eqnarray}
where $\bv{\upsilon}=(\upsilon^{(c)}_n, \forall\, (n, c))$, $\bv{\nu}=(\nu_n, \forall\,n)$ are the Lagrange multipliers, and the dual function $g(\bv{\upsilon}, \bv{\nu})\triangleq\sum_m\pi_mg_m(\bv{\upsilon}, \bv{\nu})$, where $g_m(\bv{\upsilon}, \bv{\nu})$ is defined as: 
\begin{eqnarray}
\hspace{-.35in}&&g_m(\bv{\upsilon}, \bv{\nu}) = \sup\bigg\{ V\sum_{n, c} U^{(c)}_n(r^{nc})  \nonumber\\
\hspace{-.35in}&&  \qquad - \sum_{n}\upsilon^{(c)}_n\big[r^{nc} +\sum_{a\in\script{N}^{(in)}_n} \mu^{(c)}_{[a, n]}(\bv{z}_m, \bv{P}^m)\label{eq:dual-function} \\
\hspace{-.3in}&&   \qquad - \sum_{b\in\script{N}^{(o)}_n}\mu^{(c)}_{[n, b]}(\bv{z}_m, \bv{P}^m)\big]    - \sum_{n}\nu_n\big[ e^m_n - \sum_{b\in\script{N}^{(o)}_n} P^m_{[n, b]} \big] \nonumber
\bigg\}.
\end{eqnarray}
Here the $\sup$ is taken over $\bv{r}$, $\bv{P}^m\in\script{P}^m$, $\bv{\mu}$, and $\bv{e}^m\preceq \bv{h}^m$. In the following, we use $(\bv{\upsilon}^*, \bv{\nu}^*)$   to represent an optimal solution of $g(\bv{\upsilon}, \bv{\nu})$.

We now present the algorithm, which uses a control parameter $V\geq1$ to tradeoff utility and delay, and specifies a learning time $T_L=V^c$ for some $c\in(0, 1)$. 
\begin{figure}[cht]
\vspace{-.1in}
\centering
\vspace{-.05in}
\includegraphics[width=2.8in, height=1.3in]{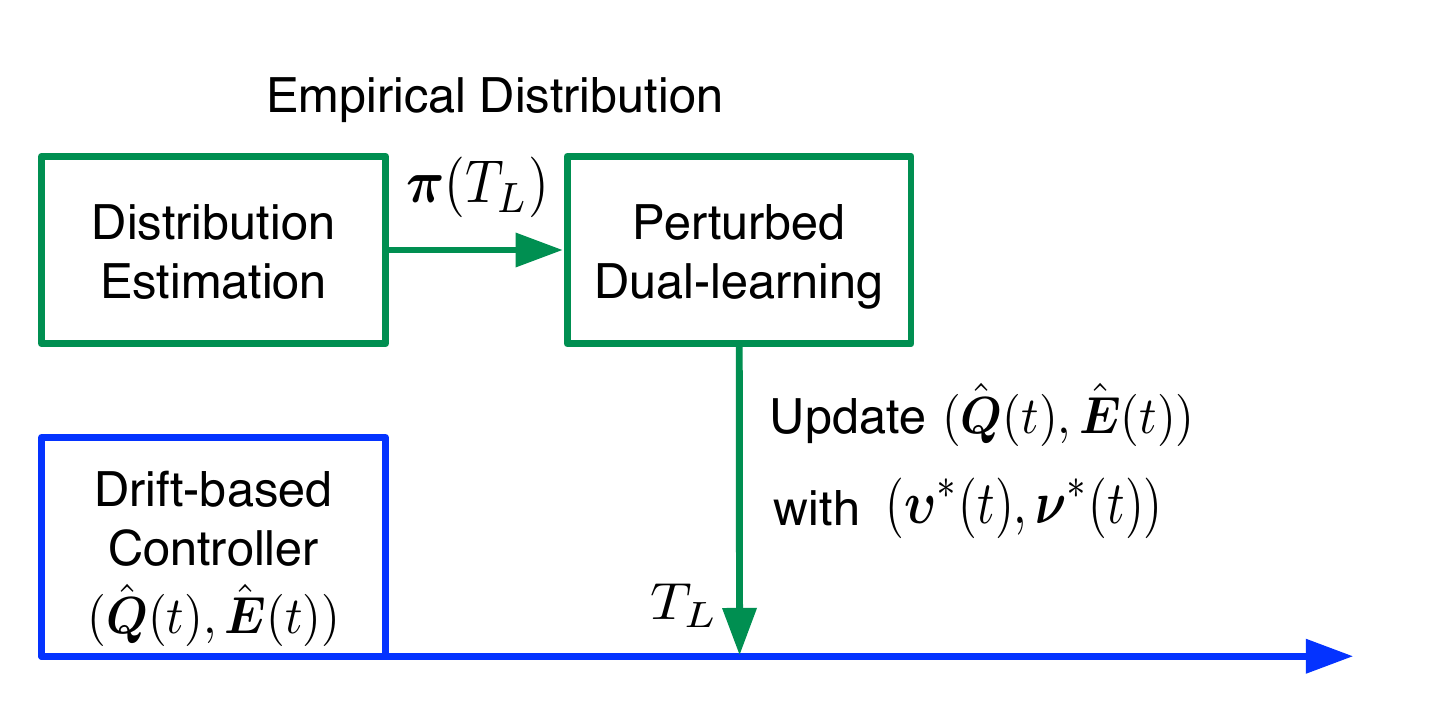}
\vspace{-.1in}
\caption{There are three main components in $\mathtt{LEM}$: (i) Build an empirical distribution $\bv{\pi}(t)$ for $\bv{z}(t)$.  (ii) Perform perturbed dual learning and obtain the empirical optimal multiplier at time $T_L$. (iii) Incorporate the multiplier into the controller.}
\label{fig:lem}
\vspace{-.08in}
\end{figure}


\underline{\textsf{Learning-aided Energy Management ($\mathtt{LEM}$):}} Initialize $\bv{\xi}_Q = \bv{0}$, $\bv{\xi}_E=\bv{0}$, and set $T_L=V^c$ with $c\in(0, 1)$. 
At every time $t$, observe $\bv{Q}(t)$, $\bv{E}(t)$,  $\bv{z}(t)$, and  define the following \emph{augmented queue vectors}: 
\begin{eqnarray}
\hat{\bv{Q}}(t) = \bv{Q}(t) + \bv{\xi}_Q, \quad \hat{\bv{E}}(t) = \bv{E}(t)+ \bv{\xi}_E. \label{eq:augmenting1}
\end{eqnarray}
Then, do: 
\begin{itemize}
\item \textbf{Energy harvesting:} If $\hat{E}_n(t)-\theta_n<0$, harvest energy,  i.e., set $e_n(t)=h_n(t)$. Else set $e_n(t)=0$. 
\item \textbf{Data admission:} For each $n$, choose $R^{(c)}_n(t)$ by solving the following optimization problem: 
\begin{eqnarray}
\hspace{-.2in}\max: \,\, VU^{(c)}_n(r) - \hat{Q}^{(c)}_n(t)r, \,\,\, s.t. \,\,0\leq r\leq R_{\max}. \label{eq:esa-admit}
\end{eqnarray}
\item  \textbf{Power allocation:} Define the weight of commodity $c$ data over link $[n, b]$ as:
\begin{eqnarray}
W^{(c)}_{[n, b]}(t) \triangleq \big[ \hat{Q}_n^{(c)}(t) - \hat{Q}_b^{(c)}(t) \big]^+. \label{eq:diff-backlog}
\end{eqnarray}
Then, define the link weight $W_{[n, b]}(t)=\max_{c}W^{(c)}_{[n, b]}(t)$, and  
choose $\bv{P}(t)\in\script{P}^{(\bv{z}(t))}$ to maximize: 
\begin{eqnarray}
\hspace{-.4in} &&G(\bv{P}(t))\triangleq\sum_{n}\bigg[\sum_{b\in\script{N}^{(o)}_n}\mu_{[n, b]}(t)W_{[n, b]}(t) \label{eq:esa-power}\\
\hspace{-.4in} &&\qquad\qquad\qquad\qquad+(\hat{E}_n(t)-\theta_n)\sum_{b\in\script{N}^{(o)}_n} P_{[n, b]}(t)\bigg]. \nonumber
\end{eqnarray}
\item \textbf{Routing and scheduling:} For every node $n$, find any $c^*\in\rmax_{c}W^{(c)}_{[n, b]}(t)$. If $W^{(c^*)}_{[n, b]}(t)>0$, set: 
\begin{eqnarray}
\mu_{[n, b]}^{(c^*)}(t) = \mu_{[n, b]}(t), \,\, \mu_{[n, b]}^{(c)}(t) = 0, \,\forall\,c\neq c^*. 
\end{eqnarray}
That is, allocate the full rate over link $[n, b]$ to any commodity that achieves the maximum positive weight over the link. Use idle-fill if needed.
\item \textbf{Queue update and packet dropping:} Use Last-In-First-Out (LIFO) for packet selection. If for any node $n$, the resulting $\{P_{[n, b]}(t), m\}$ in (\ref{eq:esa-power}) violates
constraint (\ref{eq:energycond}), set $\sum_{b\in\script{N}^{(o)}_n}P_{[n, b]}(t) = E_n(t)$ and drop all the packets that are supposed to be transmitted.   
Update $Q^{(c)}_n(t)$ and $E_n(t)$ according to (\ref{eq:Qdynamic}) and (\ref{eq:Edynamic}), respectively. 

\item \textbf{Perturbed Dual-learning at $T_L$:} Let $N_{m}$  be the number of times states $\bv{z}_m$ appear in $\{0, ..., T_L-1\}$. Denote $\pi_{m}(T_L)=\frac{N_{m}}{T_L}$  the empirical distribution of $\bv{z}_m$. Solve: 
\begin{eqnarray}
\hspace{-.15in}\min: \,\,\, \sum_m\pi_m(T_L)g_m(\bv{\upsilon}, \bv{\nu}-\bv{\theta}), \,\, \text{s.t.} \,\,\, \bv{\upsilon}\succeq\bv{0},  \bv{\nu}\in\mathbb{R}^N, \label{eq:dual-emprical}
\end{eqnarray}
and obtain the optimal multiplier $(\bv{\upsilon}^*(T_L), \bv{\nu}^*(T_L))$. Change  $\bv{\xi}_Q$ and $\bv{\xi}_E$ in (\ref{eq:augmenting1}) to: 
\begin{eqnarray}
\bv{\xi}_Q &=& \bv{\upsilon}^*(T_L) -  V^{1-\frac{c}{2}}\log(V)^2\cdot\bv{1}  \label{eq:augmenting2} \\
\bv{\xi}_E &=&\bv{\nu}^*(T_L) -  V^{1-\frac{c}{2}}\log(V)^2\cdot\bv{1}. \,\,\Diamond \label{eq:augmenting3}
\end{eqnarray}
\end{itemize}

We will explain the controller in the next subsection. Here, we first 
note that the perturbed dual learning step is performed \emph{only once} at time $t=T_L$. \footnote{One can also devise a version of $\mathtt{LEM}$ which does continuous learning. }  Also, although $\mathtt{LEM}$ is equipped with a packet dropping option to ensure zero energy outage,  dropping rarely happens, i.e., $O(V^{-\log(V)})$. Moreover, we will show that the energy availability constraint is always ensured. 

\vspace{-.06in}
\subsection{Remarks on LEM}
$\mathtt{LEM}$ only requires  knowledge of the \emph{instantaneous}  state $\bv{z}(t)$ and queue states $\bv{Q}(t)$ and $\bv{E}(t)$. 
\emph{It does not  require any statistical information about $\bv{S}(t)$ or any knowledge of the energy state process $\bv{h}(t)$.}   
This is a very useful feature, as  exact knowledge of the energy source may be difficult to obtain at the beginning. 

There is an explicit learning step in $\mathtt{LEM}$. This distinguishes it  from previous  algorithms for energy harvesting networks, e.g., \cite{chen-energy-ton14}, \cite{huang-mobile-wiopt13},  \cite{power-harvesting-kansal07}, where sufficient statistical knowledge of the energy source is often required and no learning is considered.  We will show in Theorem \ref{theorem:lem-conv} that  $\mathtt{LEM}$  converges in $O(V^{2/3})$ time (up to a $\log$ factor), which is much faster than the  $\Theta(V)$ time for algorithms based purely on queues, or the $\Theta(V^2)$ time for algorithms based purely on learning the statistics. 

The  perturbation approach here is needed for guaranteeing the feasibility of dual learning and the resulting algorithm. Specifically, it ``shifts'' the optimal Lagrange multiplier to a positive value via $\bv{\theta}$. This step allows us to track the negative multiplier with positive queue sizes for decision making, and is critical for networks with the ``no-underflow'' constraint, e.g.,  processing networks \cite{jiang-spn}. 

Finally, note that due the general rate functions $\{\mu_{[nm]}(\bv{z}, \bv{P})\}$, our problem inevitably requires a centralized controller for achieving optimality. Thus, $\mathtt{LEM}$ also requires centralized implementation. 
In the special case when network links do not interfere with each other and the dynamics are all independent,  nodes can estimate the local distributions and pass the information to a leader node to compute $(\bv{\upsilon}^*(T_L), \bv{\nu}^*(T_L))$. Then, the leader node sends back the multiplier information to the nodes. After that,  $\mathtt{LEM}$ can be implemented  in a distributed manner. 

%
%




%

%

\vspace{-.04in}
\subsection{Information Augmented Controller}\label{subsection:controller}
Here we provide  mathematical explanations for our controller. As we will see, the control rules are results of a  drift minimization principle  \cite{neelynowbook}, \emph{augmented by the information learned in perturbed dual learning}. 


To start, we define a \emph{perturbed} Lyapunov function as follows:
\begin{eqnarray}
L(t)\triangleq\frac{1}{2}\sum_{n, c\in\script{N}} \big[Q^{(c)}_n(t)\big]^2+\frac{1}{2}\sum_{n\in\script{N}} \big[E_n(t)-\theta_n\big]^2.\label{eq:lyapunov-func}
\end{eqnarray}
Denote $\bv{Y}(t)=(\bv{Q}(t), \bv{E}(t))$ and define a one-slot conditional Lyapunov drift as follows:
\begin{eqnarray}
\Delta(t) \triangleq \expect{L(t+1) - L(t)\left.|\right. \bv{Y}(t)}. 
\end{eqnarray}
We then have the following lemma from \cite{huangneely-energy-ton13}. 
\begin{lemma}\label{lemma:drift}
Under any feasible data admission action, power allocation action that satisfies constraint (\ref{eq:energycond}), routing and scheduling action, and energy harvesting action that can be implemented at time $t$, we have: 
\begin{eqnarray}
\hspace{-.3in} &&\Delta(t) -V\expect{\sum_{n, c} U^{(c)}_n(R^{(c)}_n(t))\left.|\right. \bv{Y}(t)} \label{eq:drift2}\\
\hspace{-.3in} &&\leq B +\sum_{n\in\script{N}}(E_n(t)-\theta_n)\expect{e_n(t)\left.|\right. \bv{Y}(t)}  \nonumber\\
\hspace{-.3in} && \qquad\,  - \expect{ \sum_{n, c} \big[VU^{(c)}_n(R^{(c)}_n(t)) - Q^{(c)}_n(t)R^{(c)}_n(t)\big] \left.|\right. \bv{Y}(t)} \nonumber\\ 
\hspace{-.3in} && \qquad\, - \expect{\sum_{n} \bigg[\sum_{c}\sum_{b\in\script{N}^{(o)}_n}\mu_{[n, b]}^{(c)}(t) \big[ Q_n^{(c)}(t) - Q_b^{(c)}(t)\big] \nonumber\\
\hspace{-.3in} && \qquad \qquad \quad \quad \quad\,\, +(E_n(t)-\theta_n)\sum_{b\in\script{N}^{(o)}_n} P_{[n, b]}(t) \bigg]\left.|\right.\bv{Y}(t)}.  \nonumber
\end{eqnarray}
%
Here $B\triangleq N^2(\frac{3}{2}d^2_{\max}\mu^2_{\max}+R_{\max}^2)+\frac{N}{2}(P_{\max}+h_{\max})^2$, and $d_{\max}$ is defined as the maximum in-degree/out-degree of any node in the network. $\Diamond$
\end{lemma}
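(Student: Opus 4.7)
The plan is to prove this via a standard squared-drift computation applied to the perturbed Lyapunov function $L(t)$ defined in (\ref{eq:lyapunov-func}), treating the data-queue and (shifted) energy-queue contributions separately and then combining.

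First I would handle the data-queue term. Starting from the dynamics (\ref{eq:Qdynamic}), I would apply the elementary inequality $([x - y]^+ + z)^2 \leq x^2 + y^2 + z^2 - 2x(y-z)$ (with $x = Q^{(c)}_n(t)$, $y = \sum_{b}\mu^{(c)}_{[n,b]}(t)$, $z = \sum_a\mu^{(c)}_{[a,n]}(t) + R^{(c)}_n(t)$) to obtain
\begin{eqnarray*}
\tfrac{1}{2}\big[Q^{(c)}_n(t+1)\big]^2 - \tfrac{1}{2}\big[Q^{(c)}_n(t)\big]^2 &\leq& \tfrac{1}{2}\Big(\big[\sum_b\mu^{(c)}_{[n,b]}(t)\big]^2 + \big[\sum_a\mu^{(c)}_{[a,n]}(t)+R^{(c)}_n(t)\big]^2\Big) \\
&& - Q^{(c)}_n(t)\Big[\sum_b\mu^{(c)}_{[n,b]}(t) - \sum_a\mu^{(c)}_{[a,n]}(t) - R^{(c)}_n(t)\Big].
\end{eqnarray*}
Using the uniform bounds $\mu_{[n,b]}(t)\leq\mu_{\max}$, $R^{(c)}_n(t)\leq R_{\max}$, and $|\script{N}^{(in)}_n|,|\script{N}^{(o)}_n|\leq d_{\max}$, the square terms would sum over $(n,c)$ to a constant dominated by $N^2(\tfrac{3}{2}d_{\max}^2\mu_{\max}^2 + R_{\max}^2)$, matching the first part of $B$.

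Next I would handle the energy-queue term. Since (\ref{eq:Edynamic}) holds with equality, writing $E_n(t+1) - \theta_n = (E_n(t)-\theta_n) + e_n(t) - \sum_b P_{[n,b]}(t)$ and squaring gives
\begin{eqnarray*}
\tfrac{1}{2}[E_n(t+1)-\theta_n]^2 - \tfrac{1}{2}[E_n(t)-\theta_n]^2 &=& \tfrac{1}{2}\Big(e_n(t) - \sum_b P_{[n,b]}(t)\Big)^2 \\
&& + (E_n(t)-\theta_n)\Big(e_n(t) - \sum_b P_{[n,b]}(t)\Big),
\end{eqnarray*}
and the square bound $(e_n(t) + \sum_b P_{[n,b]}(t))^2 \leq (h_{\max}+P_{\max})^2$ summed over $n$ contributes the $\tfrac{N}{2}(P_{\max}+h_{\max})^2$ portion of $B$. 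Summing both contributions, taking the conditional expectation given $\bv{Y}(t)$, and subtracting the penalty term $V\expect{\sum_{n,c}U^{(c)}_n(R^{(c)}_n(t))|\bv{Y}(t)}$ from both sides yields (\ref{eq:drift2}) up to rearrangement.

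The final step — and the one requiring the most care — is the bookkeeping that reshapes the raw $-Q^{(c)}_n(t)[\sum_b\mu^{(c)}_{[n,b]}(t) - \sum_a\mu^{(c)}_{[a,n]}(t)]$ terms into the link-weight form $\sum_c\sum_{b\in\script{N}^{(o)}_n}\mu^{(c)}_{[n,b]}(t)[Q^{(c)}_n(t)-Q^{(c)}_b(t)]$ shown in the lemma. This requires swapping the order of summation on the incoming-link term: $\sum_{n,c}Q^{(c)}_n(t)\sum_{a\in\script{N}^{(in)}_n}\mu^{(c)}_{[a,n]}(t) = \sum_{[a,n]\in\script{L},c}Q^{(c)}_n(t)\mu^{(c)}_{[a,n]}(t)$, after which the outgoing and incoming contributions can be combined into the differential backlog form. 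The data-admission terms $Q^{(c)}_n(t)R^{(c)}_n(t)$ and the energy terms $(E_n(t)-\theta_n)\sum_b P_{[n,b]}(t)$ and $(E_n(t)-\theta_n)e_n(t)$ then split naturally into the three conditional expectations appearing on the right-hand side of (\ref{eq:drift2}). The main subtlety is that summation index interchange relies on the convention $Q^{(c)}_c(t)=0$, which guarantees boundary terms vanish and allows the clean decomposition stated.
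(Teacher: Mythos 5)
Your proposal is correct and is exactly the standard quadratic Lyapunov drift derivation that the paper itself omits (it defers the proof of Lemma \ref{lemma:drift} to \cite{huangneely-energy-ton13}): squaring the queue dynamics (\ref{eq:Qdynamic}) via $([x-y]^++z)^2\leq x^2+y^2+z^2-2x(y-z)$, squaring the exact energy recursion (\ref{eq:Edynamic}) around $\theta_n$, bounding the second-order terms by $B$, and reindexing the incoming-rate sums over links to obtain the differential-backlog form. The only tiny inaccuracy is that the summation interchange $\sum_{n,c}Q^{(c)}_n\sum_{a\in\script{N}^{(in)}_n}\mu^{(c)}_{[a,n]}=\sum_{n,c}\sum_{b\in\script{N}^{(o)}_n}Q^{(c)}_b\mu^{(c)}_{[n,b]}$ is valid without invoking $Q^{(c)}_c(t)=0$; that convention merely makes the destination terms vanish and does not affect the bound.
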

\begin{proof}
See \cite{huangneely-energy-ton13}. 
\end{proof}

Now add to both sides of (\ref{eq:drift2}) the following \emph{drift-augmenting} term, which carries the information  learned in the dual learning step, i.e.,  $\bv{\xi}_Q$ and $\bv{\xi}_E$: 
\begin{eqnarray}
\hspace{-.4in}&&\Delta_A(t)\triangleq  - \expect{\sum_n\xi_{Q,n}^{(c)} [  \sum_{b\in\script{N}^{(o)}_n}\mu_{[n, b]}^{(c)}(t) \label{eq:augmenting}\\
\hspace{-.4in}&&\qquad\qquad\qquad\quad\quad\quad - \sum_{b\in\script{N}^{(in)}_n}\mu_{[a, n]}^{(c)}(t)  - R_n^{(c)}(t)  ]   \left.|\right. \bv{Y}(t)}\nonumber\\
\hspace{-.4in}&&\qquad \quad\quad - \expect{\sum_n\xi_{E, n} [ \sum_{b\in\script{N}^{(o)}_n} P_{[n, b]}(t) - e_n(t) ]   \left.|\right. \bv{Y}(t)}. \nonumber
\end{eqnarray}
 Doing so, one obtains the following \emph{augmented drift}: 
\begin{eqnarray}
\hspace{-.3in} &&\Delta(t) + \Delta_A(t)-V\expect{\sum_{n, c} U^{(c)}_n(R^{(c)}_n(t))\left.|\right. \bv{Y}(t)} \label{eq:drift3}\\
\hspace{-.3in} &&\leq B +\sum_{n\in\script{N}}(\hat{E}_n(t)-\theta_n)\expect{e_n(t)\left.|\right. \bv{Y}(t)}  \nonumber\\
\hspace{-.3in} && \qquad   - \expect{ \sum_{n, c} \big[VU^{(c)}_n(R^{(c)}_n(t)) - \hat{Q}^{(c)}_n(t)R^{(c)}_n(t)\big] \left.|\right. \bv{Y}(t)} \nonumber\\ 
\hspace{-.3in} &&  \qquad - \expect{\sum_{n} \bigg[\sum_{c}\sum_{b\in\script{N}^{(o)}_n}\mu_{[n, b]}^{(c)}(t) \big[ \hat{Q}_n^{(c)}(t) - \hat{Q}_b^{(c)}(t)\big] \nonumber\\
\hspace{-.3in} && \qquad \qquad \quad \quad \quad  +(\hat{E}_n(t)-\theta_n)\sum_{b\in\script{N}^{(o)}_n} P_{[n, b]}(t) \bigg]\left.|\right.\bv{Y}(t)}.  \nonumber
\end{eqnarray}
Comparing (\ref{eq:drift3}) and $\mathtt{LEM}$, we see that $\mathtt{LEM}$   is constructed to \emph{minimize the right-hand-side (RHS) of the augmented drift (\ref{eq:drift3})}. This augmenting step is important and provides a way to incorporate  learning into control algorithm design.

\section{Performance analysis}\label{section:analysis}

Here we present the performance results  for $\mathtt{LEM}$. We first state the assumptions. Then, we present the theorems.   

\subsection{Assumptions}
In our analysis, we make the following assumptions. 
\begin{assumption}\label{assumption:bdd-LM}  
There exists a constant $\epsilon=\Theta(1)>0$ such that for any valid distributions $\hat{\bv{\pi}} = (\hat{\pi}_{1}, ..., \hat{\pi}_{M})$  with $\|  \hat{\bv{\pi}} - \bv{\pi} \|\leq \epsilon$, there exist a set of actions $\{R_{n, k}^{(c)}\}_{k\in\mathbb{N}_+}$, $\{\bv{P}_{k}^{m}\}^{m}_{k\in\mathbb{N}_+}$,  $\{\bv{\mu}_{k}^{m}\}^m_{k\in\mathbb{N}_+}$,  and $\{ \bv{e}_{k}^{m}\}^m_{k\in\mathbb{N}_+}$, and distributions $\{\vartheta^{m}_k\}^m_{k\in\mathbb{N}_+}$, and $\{\varrho^{m}_{k}\}^m_{k\in\mathbb{N}_+}$  (possibly dependent on $\hat{\bv{\pi}}$), such that (i) there exists $\eta_0=\Theta(1)>0$ independent of $\hat{\bv{\pi}}$, so that: 
\begin{eqnarray}
\hspace{-.3in}&&\sum_{m}\pi_{m}\big\{\sum_k\vartheta^{m}_k[ R_{n, k}^{(c)} +  \sum_{a\in\script{N}^{(in)}_n}\mu_{[a, n], k}^{(c)},  \label{eq:slackness1} \\
\hspace{-.3in}&& \qquad\qquad\qquad \qquad  -   \sum_{b\in\script{N}^{(o)}_n}\mu_{[n, b], k}^{(c)} ]\big\} \leq  -\eta_0, \,\,\forall\,\, n, c,\nonumber 
\end{eqnarray}
and for each $n$, 
\begin{eqnarray}
\hspace{-.5in}&&\sum_m\pi_m\sum_k\varrho^{m}_{k} e_{n,k}^{m} - \sum_{m}\pi_{m}\sum_k\vartheta^{m}_k\sum_{b\in\script{N}^{(o)}_n} P_{[n, b], k}^{m}  =  0,  \label{eq:slackness2}
\end{eqnarray}
and  (ii)  $0<  \sum_m\pi_m\sum_k\varrho^{m}_{k} e_{n,k}^{m}< \sum_m\pi_mh^m_n$ $\forall\, n$. $\Diamond$
\end{assumption}

Although Assumption \ref{assumption:bdd-LM} appears complicated, it indeed only assumes that the system has a ``slackness'' property, so that there exists a stationary and randomized policy that can stabilize the system, and the resulting service rates are slightly larger than the arrival rates for the queues. 
Assumption \ref{assumption:bdd-LM} is a necessary condition for achieving network stability and  is often assumed in network optimization works with $\epsilon=0$, e.g., \cite{eryilmaz_qbsc_ton07}.  
Here with $\epsilon>0$, we assume that systems with slightly different channel and harvestable energy distributions can also be stabilized with the same slack (the stabilizing policy may be different). 

\subsection{Performance Results}
Here we present the performance results.  We first define the following structural property of the system,  which will be used in our analysis. 
\begin{definition}
A system is called \emph{polyhedral} with parameter $\rho>0$, if the dual function $g(\bv{v}, \bv{\nu})$ satisfies:
\begin{eqnarray}
g(\bv{v}^*, \bv{\nu}^*)\leq g(\bv{v}, \bv{\nu})-\rho\|(\bv{v}^*, \bv{\nu}^*)-(\bv{v}, \bv{\nu})\|.\,\,\,\Diamond\label{eq:polyhedral}
\end{eqnarray}
\end{definition}
This polyhedral property  often appears in practical systems, especially when the control actions are discrete (see \cite{huangneely_dr_tac} for more discussions). Moreover, (\ref{eq:polyhedral}) holds for all $V$ values whenever it holds for $V=1$.

Our first  lemma shows that with $\theta_n=V\log(V)$, one can guarantee that at time $T_L$, the empirical multipliers $\bv{v}^*(t)$ and $\bv{\nu}^*(t)$ are close to their true values with high probability. 
\begin{lemma}\label{lemma:beta-conv}
For a sufficiently large $V$, with probability $1- O(\frac{1}{V^{4\log(V)}})$, at time $t=T_L=V^c$ with $c\in(0, 1)$, one has: 
\begin{eqnarray}
\hspace{-.4in}&&\| (\bv{v}^*(t), \bv{\nu}^*(t))  - (\bv{v}^*, \bv{\nu}^*+\bv{\theta}) \| = \Theta(V^{1-\frac{c}{2}}\log(V)). \label{eq:approx-guess} 
\end{eqnarray}
Here $\bv{\nu}^*+\bv{\theta} = \Theta(V\log(V))>0$. $\Diamond$
\end{lemma}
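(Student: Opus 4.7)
The plan is three steps: (i) a Hoeffding concentration bound for the empirical distribution $\bv{\pi}(T_L)$; (ii) linearity of the dual function in $\bv{\pi}$ to control $\sup|\hat g - g|$, where $\hat g(\bv{v}, \bv{\nu}) := \sum_m \pi_m(T_L) g_m(\bv{v}, \bv{\nu})$; and (iii) the polyhedral property (\ref{eq:polyhedral}) to convert closeness of dual values into closeness of dual argmins. A change of variable $\bv{\nu}' = \bv{\nu} - \bv{\theta}$ then handles the perturbation inside (\ref{eq:dual-emprical}).

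For step (i), since $\{\bv{z}(t)\}$ is i.i.d.\ and takes values in the finite set $\{\bv{z}_1, \ldots, \bv{z}_M\}$, each frequency $\pi_m(T_L) = N_m/T_L$ is the mean of $T_L = V^c$ Bernoulli indicators. Hoeffding's inequality together with a union bound over the $M$ states yields
\[ \prob{\|\bv{\pi}(T_L) - \bv{\pi}\|_\infty \geq \delta} \leq 2 M e^{-2 T_L \delta^2}. \]
Choosing $\delta = \sqrt{2}\,\log(V)/V^{c/2}$ gives a failure probability of $2M V^{-4\log V} = O(1/V^{4\log V})$. On the complementary event $\mathcal{E}$, $\|\bv{\pi}(T_L) - \bv{\pi}\|_1 = O(\log(V)/V^{c/2})$ since $M$ is $\Theta(1)$.

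For step (ii), both $g$ and $\hat g$ are linear in $\bv{\pi}$, so on $\mathcal{E}$,
\[ |\hat g(\bv{v}, \bv{\nu}) - g(\bv{v}, \bv{\nu})| \leq \|\bv{\pi}(T_L) - \bv{\pi}\|_1 \,\max_m |g_m(\bv{v}, \bv{\nu})|. \]
Uniform boundedness of utilities, rates, powers, and harvested energy gives $|g_m(\bv{v}, \bv{\nu})| = O(V + \|\bv{v}\| + \|\bv{\nu}\|)$, which combined with the standard drift-plus-penalty estimate $\|(\bv{v}^*, \bv{\nu}^*)\| = O(V)$ makes $\max_m |g_m| = O(V)$ on any $O(V)$-ball around $(\bv{v}^*, \bv{\nu}^*)$; hence $\sup|\hat g - g| = O(V^{1-c/2}\log V)$ there. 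For step (iii), let $(\hat{\bv{v}}^*, \hat{\bv{\nu}}^*)$ minimize $\hat g$; the sandwich $\hat g(\hat{\bv{v}}^*, \hat{\bv{\nu}}^*) \leq \hat g(\bv{v}^*, \bv{\nu}^*)$ gives
\[ 0 \leq g(\hat{\bv{v}}^*, \hat{\bv{\nu}}^*) - g(\bv{v}^*, \bv{\nu}^*) \leq 2\sup|\hat g - g|, \]
and (\ref{eq:polyhedral}) then forces $\|(\hat{\bv{v}}^*, \hat{\bv{\nu}}^*) - (\bv{v}^*, \bv{\nu}^*)\| \leq (2/\rho)\sup|\hat g - g| = O(V^{1-c/2}\log V)$, comfortably inside the $O(V)$ neighborhood. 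A change of variable $\bv{\nu}' = \bv{\nu} - \bv{\theta}$ in (\ref{eq:dual-emprical}) identifies $\bv{v}^*(T_L) = \hat{\bv{v}}^*$ and $\bv{\nu}^*(T_L) = \hat{\bv{\nu}}^* + \bv{\theta}$, giving the $O$ direction of (\ref{eq:approx-guess}); the matching $\Omega$ (if needed) follows from a standard anti-concentration bound for the empirical distribution. The claim $\bv{\nu}^* + \bv{\theta} = \Theta(V\log V) > 0$ is immediate because $\bv{\theta} = V\log(V)\cdot\bv{1}$ dominates the $O(V)$ magnitude of $\bv{\nu}^*$.

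The main obstacle I anticipate is the mild circularity in step (ii): the bound on $|\hat g - g|$ scales with the size of the region over which $\max_m|g_m|$ is controlled, but that region must a priori be known to contain $(\hat{\bv{v}}^*, \hat{\bv{\nu}}^*)$. This is handled by bootstrapping. One first uses a crude pointwise bound at $(\bv{v}^*, \bv{\nu}^*)$ together with the polyhedral growth of $\hat g$ (inherited from that of $g$ for large $V$ on $\mathcal{E}$) to confine $(\hat{\bv{v}}^*, \hat{\bv{\nu}}^*)$ to a ball of radius $O(V)$, and only then applies the sharpened $O(V^{1-c/2}\log V)$ estimate above.
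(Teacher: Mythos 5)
Your proposal is correct and follows the same backbone as the paper's proof: concentration of the empirical distribution at $T_L=V^c$ (the paper's bound (\ref{eq:prob-error-bdd}) is exactly your Hoeffding-plus-union-bound step, giving the $1-O(V^{-4\log V})$ probability), linearity of the dual in $\bv{\pi}$ to bound the value perturbation, a sandwich argument showing the empirical minimizer nearly minimizes the true dual, and the polyhedral property (\ref{eq:polyhedral}) to convert the value gap into the distance bound (\ref{eq:approx-guess}); the shift by $\bv{\theta}$ is handled identically, and both you and the paper treat the stated $\Theta(\cdot)$ as an upper bound on the error. The one place you genuinely diverge is the step you flag as circular: controlling $\max_m|g_m|$ at the (a priori unknown) empirical minimizer. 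The paper resolves this not by bootstrapping but by invoking Assumption \ref{assumption:bdd-LM}: since the slackness policies exist for every distribution within $\epsilon$ of $\bv{\pi}$, it plugs those policies into the empirical dual, uses $\sum_m\pi_m(t)g_m(\bv{\upsilon}^*(t),\bv{\nu}^*(t)-\bv{\theta})\leq VU_{\max}$, and extracts the explicit $O(V)$ bounds (\ref{eq:data-bdd-dual})--(\ref{eq:energy-bdd-dual}) on the empirical multipliers themselves, after which the per-state bound $|g_m|\leq VU_{\max}+q_d\gamma_q+q_e\gamma_e$ is immediate. Your alternative—confining the empirical minimizer to an $O(V)$ ball because the perturbation $\hat g-g$ has Lipschitz constant $O(\|\bv{\pi}(T_L)-\bv{\pi}\|_1)=o(1)$ in $(\bv{v},\bv{\nu})$, so $\hat g$ inherits coercive growth at rate at least $\rho/2$ from the polyhedral property for large $V$—is sound and in fact can be collapsed into a single step yielding the $O(V^{1-c/2}\log V)$ radius directly; it buys you independence from the robust-slackness content of Assumption \ref{assumption:bdd-LM} at this point (you still need $\|(\bv{v}^*,\bv{\nu}^*)\|=O(V)$, i.e., the cited $\bv{\nu}^*=\Theta(V)$ result, to bound the perturbation at the center), whereas the paper's route gives explicit constants tied to the slack parameters $\eta_0,\eta_1$ and reuses machinery it needs elsewhere. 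Your closing remark that a matching lower bound would come from anti-concentration is the only shaky claim (distribution error need not transfer to multiplier error from below), but the paper proves no lower bound either, so nothing essential is lost.
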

\begin{proof}
See Appendix A. 
\end{proof}
Since $\bv{\nu}^*+\bv{\theta} = \Theta(V\log(V))$, we see that the relative error of $(\bv{v}^*(t), \bv{\nu}^*(t))$ is quite small. 
This high accuracy (with respect to the size of $(\bv{v}^*, \bv{\nu}^*+\bv{\theta})$)  contributes to  achieving a good performance and fast convergence rate for $\mathtt{LEM}$. 
Here $\bv{\nu}^*+\bv{\theta} = \Theta(V\log(V))>0$ is important,  because  without $\bv{\theta}$, we may get a non-positive $\bv{\nu}^*$ after solving (\ref{eq:dual-emprical}), due to the fact that (\ref{eq:opt-energy}) is an equality constraint. In that  case, it is impossible to use $\bv{E}(t)$ to track  $\bv{\nu}^*$ and to base decisions on $\bv{E}(t)$. 

We  now state our first main theorem, which  summarizes the performance of $\mathtt{LEM}$. 
\begin{theorem}\label{theorem:lem}
Suppose  that the dual function $g(\bv{v}, \bv{\nu})$ is polyhedral with $\rho = \Theta(1)>0$, i.e., independent of $V$, and has a unique optimal $(\bv{v}^*, \bv{\nu}^*)$ with $\bv{v}^*>0$. Then, under $\mathtt{LEM}$  with $\theta_n=V\log(V)$ and a sufficiently large $V$,   with probability $1- O(\frac{1}{V^{4\log(V)}})$, we have: 
\begin{enumerate}
\item[(a)] 
The average  queue sizes satisfy:  
\begin{eqnarray}
\hspace{-.3in}&& Q^{(c)}_{n, \text{av}} \leq \frac{3}{2}V^{1-\frac{c}{2}}\log(V)^2 + O(1),\,\,\,\forall\,\,(n, c), \label{eq:data-queue-bound}\\
\hspace{-.3in}&& E_{n, \text{av}} \leq \frac{3}{2}V^{1-\frac{c}{2}}\log(V)^2 + O(1), \,\,\, \forall\,\, n.\label{eq:energy-queue-bound}
\end{eqnarray}
In particular, in steady state, there exist $\Theta(1)$ constants $D, \xi, K$ such that: 
\begin{eqnarray}
\hspace{-.6in}&&\prob{ Q_n^{(c)}(t) \geq \frac{3}{2}V^{1-\frac{c}{2}}\log(V)^2+D +b}  \leq \xi e^{-Kb}\label{eq:queue-prob-bdd} \\
\hspace{-.6in}&&\prob{ E_n(t) \geq \frac{3}{2}V^{1-\frac{c}{2}}\log(V)^2+D+b} \leq \xi e^{-Kb}.  \label{eq:energy-prob-bdd}
\end{eqnarray}
\item[(b)] For every data queue $j$ with arrival rate $\lambda_j>0$, there exist a set of packets with rate $\tilde{\lambda}_j\geq [\lambda_j - O(1/V^{\log(V)})]^+$, such that their average delay at queue $j$ is $O(\log(V)^2)$. 
\item[(c)] Let $\bv{\overline{r}}=(\overline{r}^{nc}, \forall\, (n, c))$ be the time average admitted rate vector achieved by $\mathtt{LEM}$ defined in Section \ref{subsection:rate-utility}. We have:  
\begin{eqnarray}
U_{tot}(\overline{\bv{r}}) &\geq& U_{tot}(\bv{r}^*) -  O(\frac{1}{V}). \label{eq:utility-perf}
\end{eqnarray}
Here $\bv{r}^*$ is an optimal solution of our problem. Moreover,  no dropping takes places before time $T_L$ and the average packet dropping rate is $O(1/V^{\log(V)})$.  $\Diamond$ 
\end{enumerate}
\end{theorem}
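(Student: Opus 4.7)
The plan is to condition on the high-probability event of Lemma \ref{lemma:beta-conv}, after which the empirical multipliers $(\bv{v}^*(T_L),\bv{\nu}^*(T_L))$ are deterministic (up to a $\Theta(V^{1-c/2}\log V)$ slack) and the shifts $\bv{\xi}_Q,\bv{\xi}_E$ are well controlled. On this event, the augmented queues $\hat{\bv{Q}}(t),\hat{\bv{E}}(t)$ should behave like standard backpressure/energy-queues whose ``target'' levels are almost exactly $V^{1-c/2}\log(V)^2\cdot\bv 1$, because by construction $\bv{\xi}_Q+V^{1-c/2}\log(V)^2\bv 1=\bv{v}^*(T_L)$ and $\bv{\xi}_E+V^{1-c/2}\log(V)^2\bv 1=\bv{\nu}^*(T_L)$, and $(\bv{v}^*(T_L),\bv{\nu}^*(T_L))$ is close to the perturbed optimal dual $(\bv{v}^*,\bv{\nu}^*+\bv\theta)$ of the true problem. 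The controller in $\mathtt{LEM}$ was derived in Section \ref{subsection:controller} to minimize the RHS of the augmented drift (\ref{eq:drift3}); this is the lever that drives both stability and utility.

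The first main step is to establish exponential attraction of $\hat{\bv{Y}}(t)=(\hat{\bv{Q}}(t),\hat{\bv{E}}(t))$ to a $\Theta(V^{1-c/2}\log V)$-ball around $V^{1-c/2}\log(V)^2\cdot\bv 1$. I would do this by combining the augmented drift inequality (\ref{eq:drift3}) with the polyhedral property (\ref{eq:polyhedral}) of $g(\bv v,\bv\nu)$, following the exponential attraction technique of \cite{huangneely_dr_tac}: because $\mathtt{LEM}$ minimizes a Lagrangian that is $\rho$-steep at the optimal dual, whenever $\hat{\bv Y}(t)$ lies outside a $\Theta(V^{1-c/2}\log V)$-radius ball around $(\bv{v}^*(T_L),\bv{\nu}^*(T_L))$ the one-slot conditional drift of $\|\hat{\bv Y}(t)-(\bv{v}^*(T_L),\bv{\nu}^*(T_L))\|$ is bounded above by a negative constant, while the one-step jumps are bounded. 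A Hajek-type argument then yields exponential tails (\ref{eq:queue-prob-bdd})--(\ref{eq:energy-prob-bdd}) around $\frac{3}{2}V^{1-c/2}\log(V)^2$ (the extra $\frac12$ factor comes from combining the $\Theta(V^{1-c/2}\log V)$ learning error of Lemma \ref{lemma:beta-conv} with the perturbation term $V^{1-c/2}\log(V)^2\bv 1$). The initial $O(V^c)$ learning phase adds only an $O(V^c)$ additive contribution to time-averages and is absorbed into the $O(1)$ term once divided by time. Integrating the exponential tails yields (\ref{eq:data-queue-bound})--(\ref{eq:energy-queue-bound}).

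For (b), I would use LIFO together with (a): exponential concentration of $Q_n^{(c)}(t)$ around $\frac{3}{2}V^{1-c/2}\log(V)^2$ means that for a fraction $1-O(1/V^{\log V})$ of the slots the queue oscillates inside a window of width $O(\log(V)^2)$; any packet arriving during such slots is served (under LIFO) within that window, so its queueing delay is $O(\log(V)^2)$. The remaining $O(1/V^{\log V})$-fraction of arrivals are the ``stranded'' packets; their rate is at most the complementary rate in the statement. Part (c) is drift-plus-penalty: taking expectation in (\ref{eq:drift3}), telescoping from $T_L$ to $T$, dividing by $VT$ and letting $T\to\infty$ gives $U_{tot}(\overline{\bv r})\ge U_{tot}(\bv r^*)-B/V-o(1)$ once the $\bv{\xi}_Q,\bv{\xi}_E$ terms are shown to contribute $O(1/V)$ (this uses that $(\bv{v}^*(T_L),\bv{\nu}^*(T_L))$ is feasible for the dual). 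The $O(1/V^{\log V})$ packet-drop rate follows by computing $\Pr\{E_n(t)<P_{\max}\}$ via the exponential tail (\ref{eq:energy-prob-bdd}) at the lower side; the same bound shows no drops occur for $t<T_L$ since $e_n(t)$ is withheld before the energy buffer is non-trivially full.

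The hard part will be the exponential attraction step, because one has to juggle three simultaneous sources of error: (i) the $\Theta(V^{1-c/2}\log V)$ learning error from Lemma \ref{lemma:beta-conv}, (ii) the perturbation $\bv\theta$ that ``lifts'' $\bv{\nu}^*$ away from zero so that $\bv E(t)$ can track it without underflow, and (iii) the one-slot drift of the \emph{original} (unaugmented) queues, since the drift bound (\ref{eq:drift2}) is phrased in $\bv{Y}(t)$ but $\mathtt{LEM}$ acts on $\hat{\bv Y}(t)$. Showing that $\mathtt{LEM}$'s greedy choices still minimize an $\hat{\bv Y}$-indexed Lagrangian whose gradient aligns with the polyhedral $\rho$-cone around $(\bv{v}^*(T_L),\bv{\nu}^*(T_L))$, uniformly over $V$, is the technical crux; it is what the paper later calls the \emph{augmented drift analysis}, and it is where the ``no-underflow'' energy constraint interacts non-trivially with the queue-based decision rules.
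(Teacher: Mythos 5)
Your overall architecture matches the paper's: condition on the event of Lemma \ref{lemma:beta-conv}, get exponential attraction of the augmented queues to the (empirical) optimal multiplier via the polyhedral property and a Hajek-type argument (this is exactly the paper's Theorem \ref{theorem:attraction}, imported from \cite{huangneely_dr_tac}), read off the queue/energy tail bounds and averages from the gap between $\bv{\xi}_Q,\bv{\xi}_E$ and $(\bv{v}^*,\bv{\nu}^*+\bv{\theta})$, get the LIFO delay statement from the concentration (the paper simply invokes the LIFO-delay theorem of \cite{huang-lifo-ton}), and prove utility by drift-plus-penalty on the augmented drift (\ref{eq:drift3}) together with $g(\bv{v}^*,\bv{\nu}^*)\geq VU_{tot}(\bv{r}^*)$. (Minor slip: $\hat{\bv{Y}}(t)$ is attracted to $(\bv{v}^*(T_L),\bv{\nu}^*(T_L))=\Theta(V\log V)$, not to $V^{1-c/2}\log(V)^2\cdot\bv{1}$; it is the physical queues $\bv{Q}(t),\bv{E}(t)$ that sit near $\tfrac{3}{2}V^{1-c/2}\log(V)^2$. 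You correct this later in the same paragraph.)

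There are, however, two genuine gaps in your part (c). First, your justification for why the augmenting term contributes $O(1/V)$ --- ``this uses that $(\bv{v}^*(T_L),\bv{\nu}^*(T_L))$ is feasible for the dual'' --- does not work: $\Delta_A(t)$ is $\bv{\xi}$ times the per-slot net drift of the physical queues, and dual feasibility says nothing about that. The paper's mechanism is the queue-rate identity of Lemma \ref{lemma:q-avgrate}: because the lower-side tails (\ref{eq:rare-q})--(\ref{eq:rare-e}) show $Q^{(c)}_n(t)$ rarely falls below $\mu_{\max}$ and $E_n(t)$ rarely below $P_{\max}$ (there is a $\tfrac{1}{2}V^{1-c/2}\log(V)^2$ cushion), the time-average of (service $-$ arrivals) at every queue is $O(V^{-\log V})$; multiplying by $\|\bv{\xi}\|=\Theta(V\log V)$ and dividing by $V$ gives the needed $O(1/V)$. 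Second, your argument that no dropping occurs before $T_L$ (``$e_n(t)$ is withheld before the buffer is non-trivially full'') is not the right mechanism --- withholding harvesting prevents overflow, not outage --- and the steady-state tail bounds cannot give a sample-path, finite-horizon guarantee on $[0,T_L)$. The paper's argument is structural: before $T_L$ the shifts are zero, so $Q^{(c)}_n(t)=O(V^c)$ by bounded per-slot growth, hence all weights satisfy $W_{[n,b]}(t)\leq\beta V$, while $\theta_n=V\log V$ implies $E_n(t)-\theta_n<-\kappa\beta V$ whenever $E_n(t)\leq P_{\max}$; then properties (\ref{eq:rate-property1})--(\ref{eq:rate-property2}) of the rate-power function show that zeroing such a node's power strictly increases $G(\cdot)$ in (\ref{eq:esa-power}), so the maximizer never allocates power it cannot afford, i.e., (\ref{eq:energycond}) is never violated and no packet is dropped before $T_L$. (After $T_L$, your use of the lower tail of $E_n(t)$ for the $O(1/V^{\log V})$ dropping rate is the paper's argument.) Without these two pieces the utility bound (\ref{eq:utility-perf}) and the dropping claims are not established.
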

\begin{proof}
See Appendix  B. 
\end{proof}

By taking $\epsilon=1/V$, we see from Part (a) and Part (b) that $\mathtt{LEM}$ achieves an $[O(\epsilon), O( \log(\frac{1}{\epsilon})^2)]$ utility-delay tradeoff. 
We also see from Part (a) that $\mathtt{LEM}$ can  use an energy buffer of size  $O((\frac{1}{\epsilon})^{1-\frac{c}{2}}\log(\frac{1}{\epsilon})^2)$, which is much smaller than the $\Theta(1/\epsilon)$ size under previous algorithms. 
%


Our second main result concerns the \emph{convergence time} of $\mathtt{LEM}$. The convergence time of an algorithm characterizes how fast it (or equivalently, the system) enters its steady state. 
A faster convergence speed implies faster learning and more efficient resource allocation. 
The formal definition of the convergence time is as follows \cite{huang-learning-sig-14}: 
\begin{definition}
Let $\zeta>0$ be a given constant. The convergence time of the control algorithm is defined as: 
\begin{eqnarray}
T_\zeta\triangleq \inf\{t:  \| (\hat{\bv{Q}}(t), \hat{\bv{E}}(t))  - (\bv{v}^*, \bv{\nu}^*+\bv{\theta})  \|<\zeta  \}. \Diamond
\end{eqnarray}
\end{definition}
Here the intuition is that once $(\hat{\bv{Q}}(t), \hat{\bv{E}}(t))$ gets close to $(\bv{v}^*, \bv{\nu}^*+\bv{\theta})$, $\mathtt{LEM}$ will start making near-optimal decisions. 
\begin{theorem}\label{theorem:lem-conv}
Suppose the conditions in Theorem \ref{theorem:lem} hold. 
Under $\mathtt{LEM}$, there exists an $\Theta(1)$ constant $D$ such that, with probability $1- O(\frac{1}{V^{4\log(V)}})$, 
\begin{eqnarray}
\expect{T_{D}} = O(V^c + V^{1-\frac{c}{2}}\log(V)^2). 
\end{eqnarray}
In particular, when $c=\frac{2}{3}$,  $\expect{T_D} = O(V^{\frac{2}{3}}\log(V)^2)$. $\Diamond$
\end{theorem}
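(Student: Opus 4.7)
The plan is to decompose the convergence time into a pre-learning phase $[0, T_L]$ and a post-learning phase $[T_L, T_D]$, and bound each separately. The pre-learning phase contributes $T_L = V^c$ by construction; the post-learning phase will be handled by drift analysis on the augmented queues, leveraging the polyhedral property stated in Theorem \ref{theorem:lem}.

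First I would bound the state at time $T_L$. Because each $Q^{(c)}_n(t)$ can grow by at most $R_{\max} + d_{\max}\mu_{\max}$ per slot and each $E_n(t)$ by at most $h_{\max}$ per slot, starting from $\bv{Q}(0) = \bv{E}(0) = \bv{0}$, we get $\|\bv{Q}(T_L)\|, \|\bv{E}(T_L)\| = O(T_L) = O(V^c)$. Combining this with the perturbed dual-learning step (\ref{eq:augmenting2})--(\ref{eq:augmenting3}) and applying Lemma \ref{lemma:beta-conv}, on the high-probability event $\{\|(\bv{v}^*(T_L),\bv{\nu}^*(T_L)) - (\bv{v}^*, \bv{\nu}^*+\bv{\theta})\| = \Theta(V^{1-c/2}\log V)\}$, the triangle inequality yields
\begin{equation*}
\bigl\|(\hat{\bv{Q}}(T_L), \hat{\bv{E}}(T_L)) - (\bv{v}^*, \bv{\nu}^*+\bv{\theta})\bigr\| = O\bigl(V^c + V^{1-c/2}\log(V)^2\bigr).
\end{equation*}
This gives the initial distance entering the post-learning phase.

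Next I would analyze the post-learning phase using the Lyapunov function $\Phi(t) \triangleq \tfrac{1}{2}\|(\hat{\bv{Q}}(t),\hat{\bv{E}}(t)) - (\bv{v}^*,\bv{\nu}^*+\bv{\theta})\|^2$. The key observation, parallel to the drift-plus-penalty analysis in Lemma \ref{lemma:drift} and the augmented drift identity (\ref{eq:drift3}), is that LEM is constructed to greedily minimize the RHS of (\ref{eq:drift3}), which is the per-slot expected Lagrangian evaluated at the shifted queue state $(\hat{\bv{Q}}(t), \hat{\bv{E}}(t))$. Consequently LEM performs, on expectation, a one-step subgradient descent on the (empirical) dual function at the current shifted multiplier. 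The polyhedral property (\ref{eq:polyhedral}) with $\rho = \Theta(1)$ then implies that for any state with $\|(\hat{\bv{Q}}(t),\hat{\bv{E}}(t)) - (\bv{v}^*,\bv{\nu}^*+\bv{\theta})\| \ge D$ (for $D$ a sufficiently large $\Theta(1)$ constant absorbing the slack $O(V^{-\log V})$ gap between empirical and true multipliers plus bounded drift moments), one gets a negative drift of the distance: $\expect{\sqrt{\Phi(t+1)} - \sqrt{\Phi(t)} \mid \bv{Y}(t)} \le -\rho/2$. From here, standard hitting-time estimates for supermartingales with bounded increments (a Foster--Lyapunov / drift argument along the lines of Hajek's lemma) yield $\expect{T_D - T_L \mid \bv{Y}(T_L)} = O(\sqrt{\Phi(T_L)}/\rho) = O(V^c + V^{1-c/2}\log(V)^2)$.

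Summing the two phases and taking expectations (with the exceptional event of Lemma \ref{lemma:beta-conv} contributing only $O(V^{-4\log V})$ to $\expect{T_D}$ by the trivial upper bound on the tail of $T_D$) gives $\expect{T_D} = O(V^c + V^{1-c/2}\log(V)^2)$. Optimizing over $c \in (0,1)$ by equating $c = 1 - c/2$ yields $c = 2/3$ and $\expect{T_D} = O(V^{2/3}\log(V)^2)$. The main obstacle is the drift step in the post-learning phase: rigorously translating the polyhedral property of $g(\bv{v},\bv{\nu})$ into a negative drift of the distance functional on the \emph{shifted} queue pair $(\hat{\bv{Q}}(t),\hat{\bv{E}}(t))$, while carefully handling (i) the gap between the learned empirical multiplier $(\bv{v}^*(T_L),\bv{\nu}^*(T_L))$ and the true $(\bv{v}^*,\bv{\nu}^*+\bv{\theta})$, (ii) the rare packet-dropping events that perturb (\ref{eq:Qdynamic}), and (iii) the boundary behavior of the perturbed energy queues (where $\bv{\theta}=V\log V$ keeps $\bv{\nu}^*+\bv{\theta}$ strictly positive so that $\bv{E}(t)$ can actually track it). This is the same technical core used in \cite{huang-learning-sig-14} but must be redone here under the no-underflow constraint (\ref{eq:energycond}), which is exactly the role of the perturbation $\bv{\theta}$.
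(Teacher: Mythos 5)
Your proposal is correct and follows essentially the same route as the paper: bound the distance of $(\hat{\bv{Q}}(T_L),\hat{\bv{E}}(T_L))$ from $(\bv{v}^*,\bv{\nu}^*+\bv{\theta})$ via Lemma \ref{lemma:beta-conv}, then show a $\Theta(1)$ negative drift of that distance after $T_L$ (the paper imports this from Lemma 2 of \cite{huangneely_dr_tac} and the hitting-time bound from Theorem 4 of \cite{huang-learning-sig-14}, exactly the subgradient-descent-plus-polyhedral and Foster--Lyapunov steps you sketch), and add the learning time $V^c$. The only cosmetic difference is that you track the $O(V^c)$ queue growth before $T_L$ explicitly in the initial distance, which the paper absorbs without comment; both yield $O(V^c + V^{1-c/2}\log(V)^2)$.
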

\begin{proof}
See Appendix C. 
\end{proof}
We remark here that if one only uses pure queue-based policies to track the optimal multipliers, e.g., ESA in \cite{huangneely-energy-ton13} (can be viewed as linear learning since each queue can change by an $\Theta(1)$ amount at each time),  the convergence time is necessarily $\Theta(V)$, since the optimal multiplier is $\Theta(V)$ \cite{huangneely_dr_tac}. If instead one tries to compute the optimal solution only  by learning the distribution, it requires $\Theta(V^2)$ time to ensure that the distribution is within $O(1/V)$ accuracy. 
%
Dual learning can be viewed as combining the benefits of the two methods, i.e., the fast start of statistical learning and the smooth learning of queue-based policies. Hence, it is able to achieve a superior  convergence speed compared to both methods.

\vspace{-.06in}
\section{Simulation}\label{section:sim}
This section provides simulation results for $\mathtt{LEM}$. We consider the network shown in Fig. \ref{fig:simtopo}, which is an example of a data collecting sensor network. 
In this network, traffic are admitted from nodes $1$, $2$ and $3$, and are relayed to  node $4$.  Since we only have one commodity, we omit the superscript. 
\begin{figure}[cht]
\centering
\vspace{-.1in}
\includegraphics[height=1.0in, width=2.0in]{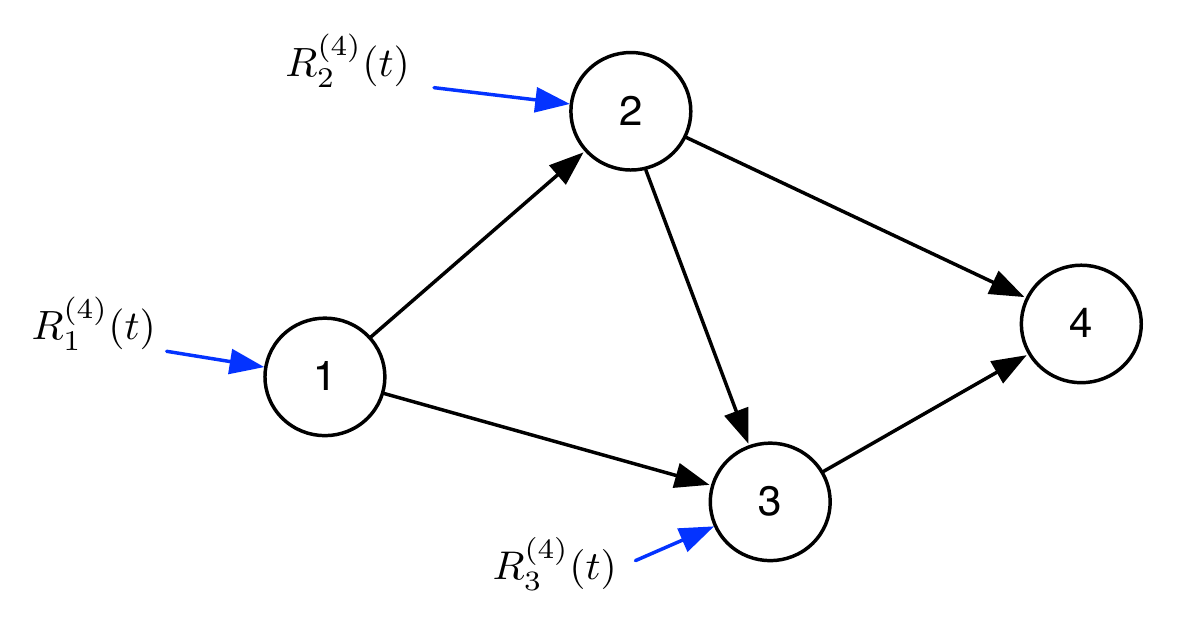}
\vspace{-.1in}
\caption{A data collection network. }\label{fig:simtopo}
\vspace{-.1in}
\end{figure}

The channel state of each communication link  is i.i.d. every time slot and can be either ``G=Good'' or ``B=Bad.'' The probabilities of being in the good state for the links are given by $\bv{p}^s = (p^s_{12}, p^s_{13}, p^s_{23}, p^s_{24}, p^s_{34}) = (0.5, 0.2, 0.3, 0.5,0.7)$. 
For each node,  the harvestable energy $h_n(t)$ takes values $2$ or $0$. The probabilities of having a $2$-unit energy arrival at the nodes are $\bv{p}^h=(p^h_{1}, p^h_{2}, p^h_{3})=(0.6, 0.3, 0.5)$. 
Note that we have a total of $32$ channel states and $8$ energy states. 

%
At every time $t$, a node can either allocate one unit power for transmission or do not transmit. 
When the channel is good, one unit power can support a transmission of two packets. Otherwise it can only support one. We assume $R_{\max}=2$ and each time $R_n(t)\in\{0, 1, 2\}$. 
The utility functions are given by: $U_{1}(r)=3\log(1+r) $ and $U_{2}(r)=2\log(1+r)$, and $U_{3}(r)=\log(1+r)$. We also assume that the links do not interfere with each other. 

%
We simulate $\mathtt{LEM}$ with $V\in\{30, 40, 50, 80, 100, 150\}$ and $c=2/3$. We choose to begin with $V=30$ so that dropping does not happen.  Each simulation is run for $10^6$ slots.  In the simulation, in order to combact the effect of $V$ not being large enough, we slightly increase the learning time from $V^{c}$ to $V^{c}\log(V)$ (same performance can be proven). We also reduce  $V^{1-c/2}\log(V)^2$ in (\ref{eq:augmenting2}) and (\ref{eq:augmenting3}) to $V^{1-c/2}\log(V)$, the results are not affected. 
For benchmark comparison, we also  implement the ESA algorithm  in \cite{huangneely-energy-ton13}. \footnote{Other algorithms in the literature are designed for different settings and do not directly apply to our problem. }
\begin{figure}[cht]
\centering
\includegraphics[height=2in, width=3.3in]{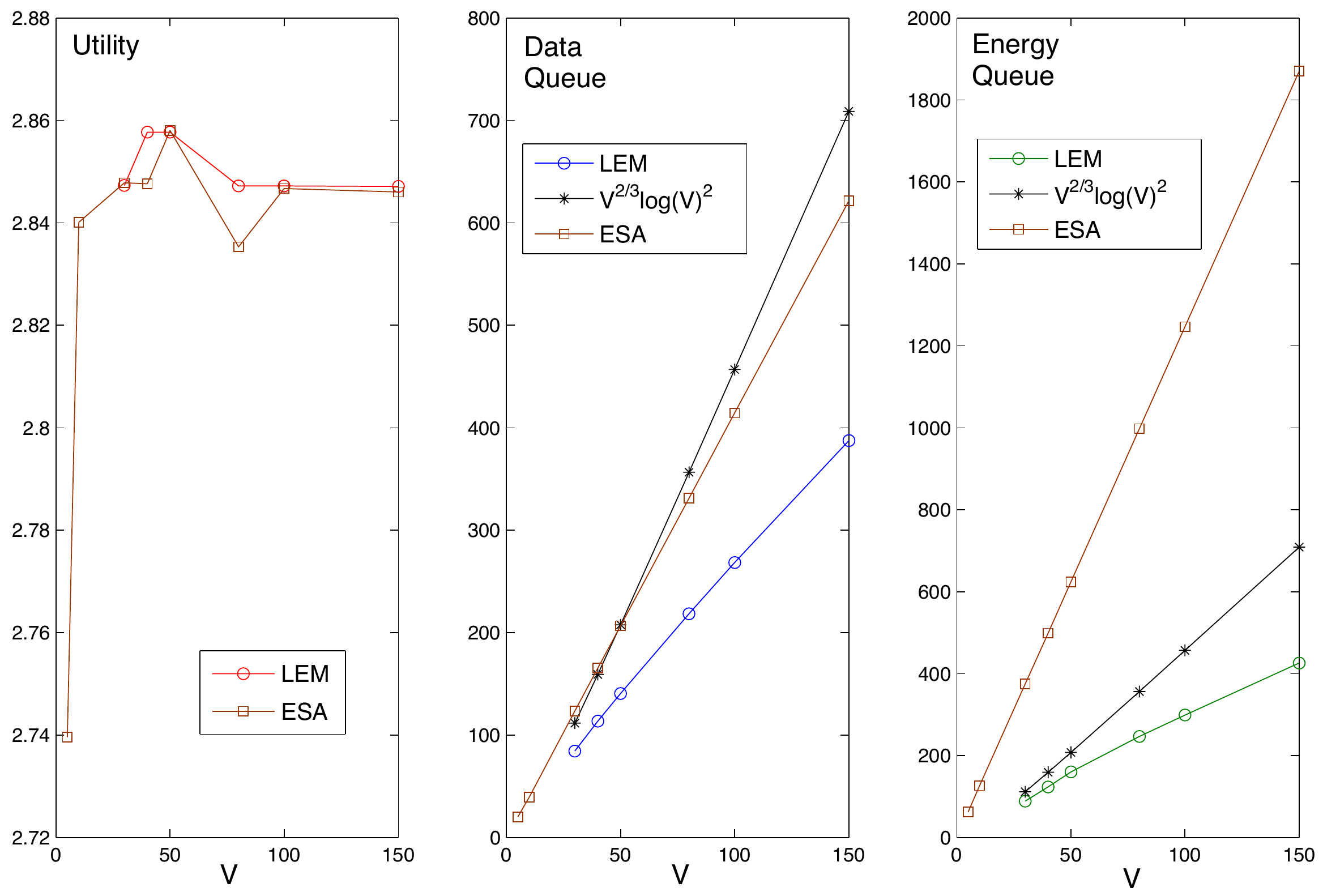}
\vspace{-.15in}
\caption{Utility and queue performance of LEM. }\label{fig:lem-per}
\vspace{-.1in}
\end{figure}

Fig. \ref{fig:lem-per} shows the utility and queue performance of $\mathtt{LEM}$. We see that  $\mathtt{LEM}$ achieves good utility performance. Here the small improvements for different $V$ values under $\mathtt{LEM}$ are because at $V=30$, the utility performance is already close to optimal. 
We also see from the middle and the right plots that both the average data queues and the average energy queues under $\mathtt{LEM}$ are of size $O(V^{1-c/2}\log(V)^2)$, whereas it is $\Theta(V)$ under ESA.  This implies that one can implement $\mathtt{LEM}$ with much smaller energy buffers  compared to ESA. 
\begin{figure}[cht]
\centering
\includegraphics[height=1.6in, width=2.8in]{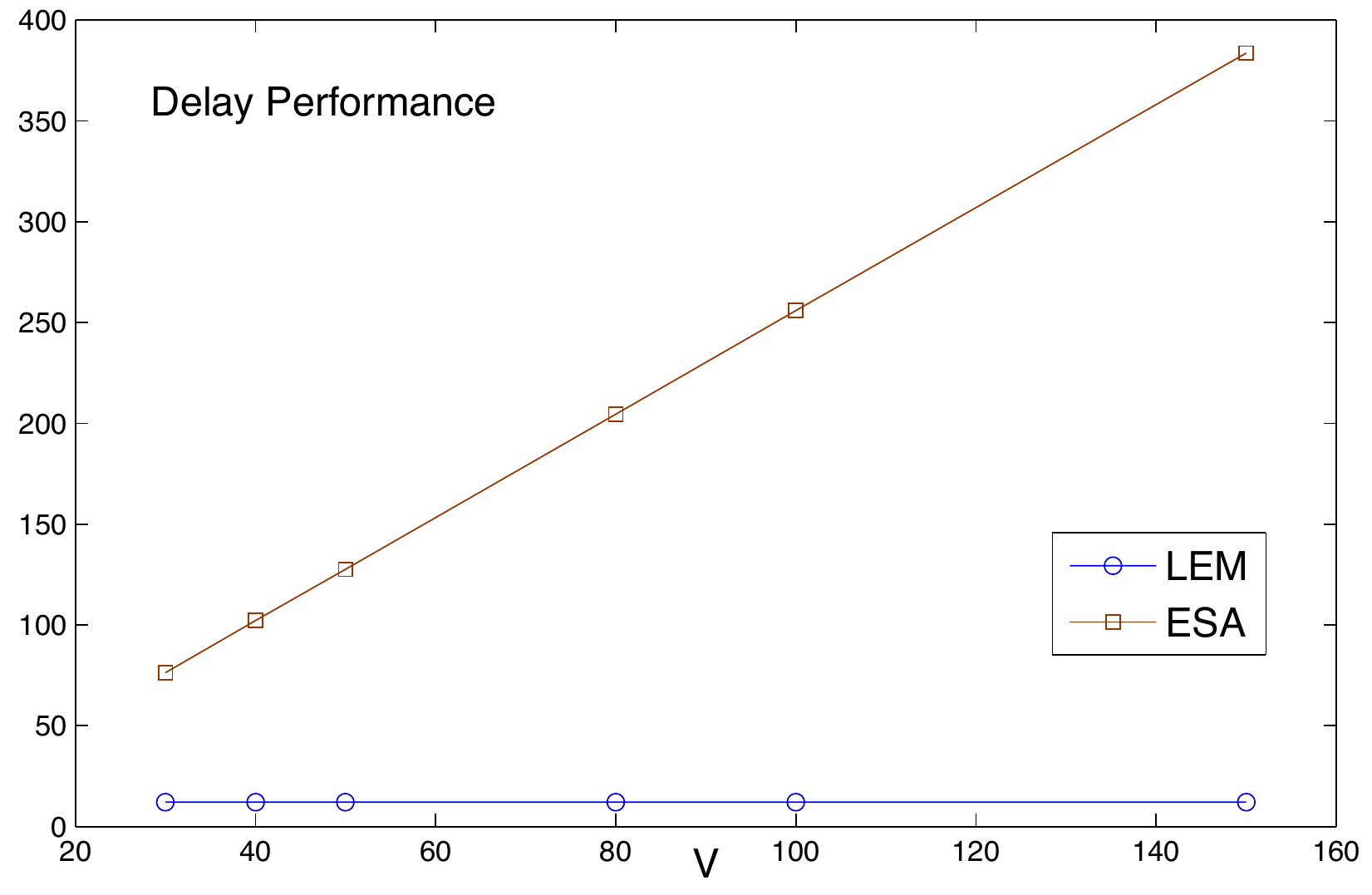}
\vspace{-.15in}
\caption{Delay scaling under LEM. }\label{fig:lem-delay}
\vspace{-.1in}
\end{figure}

Fig. \ref{fig:lem-delay} also shows the average packet delay under $\mathtt{LEM}$, computed using the packets that   exit  the network when the simulation ends (This accounts for more than $99.9\%$ of the packets that enter the network). 
It can be seen that the average packet delay under $\mathtt{LEM}$ grows very slowly, i.e., $O(\log(V)^2)$, and stays around $12$ slots. On the other hand, the delay under ESA grows linearly in $V$, and ranges from $75$ to $380$ slots (ESA requires a $V\geq30$ to achieve a similar utility performance as $\mathtt{LEM}$).  Thus, with the same utility performance, $\mathtt{LEM}$ achieves a $6$ to $30$-fold delay saving. 
\begin{figure}[cht]
\centering
\vspace{-.1in}
\includegraphics[height=2in, width=3.3in]{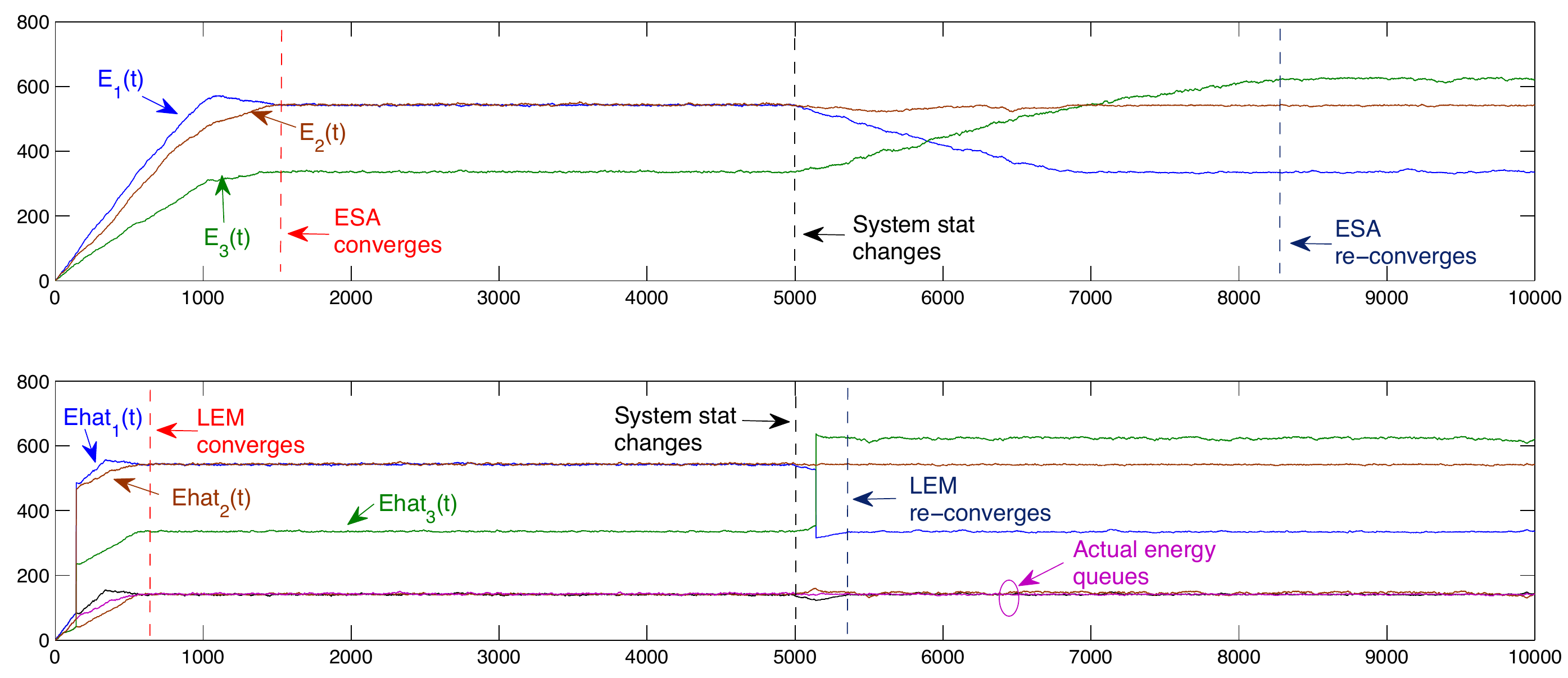}
\vspace{-.15in}
\caption{Convergence comparison between $\mathtt{LEM}$ and  ESA for $V=150$ }\label{fig:conv-per}
\vspace{-.1in}
\end{figure}

Fig. \ref{fig:conv-per} shows the convergence property of $\mathtt{LEM}$. To show the convergence, we shorten the time to $10^4$ slots and  change the system statistics in slot $5000$ to  $\bv{p}^s = (0.3, 0.2, 0.2, 0.5,0.7)$ and $\bv{p}^h=(0.1, 0.6, 0.2)$.  
We see that $\mathtt{LEM}$ converges much faster compared to ESA (We only show $\bv{E}(t)$ here. The data queues have similar performance). 
Indeed, in the first time,  the energy queue sizes converge to the optimal values (the corresponding optimal Lagrange multiplier values) at around $650$ slots under $\mathtt{LEM}$, whereas ESA converges at around $1500$ slots (an $850$-slot saving!).  Then, after we apply the change, $\mathtt{LEM}$ re-converge after about $450$ slots, whereas ESA takes about $3300$ slots to re-adapt to the system ($7\times$ faster, save about $2900$ slots!). Moreover, we also note that the actual energy queue sizes are barely affected, except for a small change after time $5000$. 
This clear demonstrates the effectiveness of using dual learning in accelerating the convergence of the algorithm. 





\section{Conclusion} \label{section:con}
In this paper, we develop a learning-aided energy management algorithm ($\mathtt{LEM}$) for general multihop energy harvesting networks. $\mathtt{LEM}$ explicitly utilizes  historic system information and learns an empirical optimal Lagrange multiplier  via perturbed dual learning. Then, it incorporates the multiplier into a drift-based system controller via drift-augmenting. We show that $\mathtt{LEM}$ is able to achieve a near-optimal utility-delay tradeoff with a finite energy storage capacity. Moreover, $\mathtt{LEM}$ possesses a provable faster convergence speed compared to existing techniques that based purely on queue-based control or based purely on learning the statistics.  
 
\bibliographystyle{unsrt}
\bibliography{mybib}

\vspace{-.1in}
 
\section*{Appendix A -- Proof of Lemma \ref{lemma:beta-conv}}
Here we prove Lemma \ref{lemma:beta-conv}. 
\begin{proof} (Lemma \ref{lemma:beta-conv}) 
First of all, the result $\bv{\nu}^*+\bv{\theta} = \Theta(V\log(V))>0$ follows from Lemma 1 in \cite{huangneely_dr_tac}, which states that $\bv{\nu}^*=\Theta(V)$. 

Now we see that $\pi_{m}(t)$ will converge to $\pi_{m}$ with probability $1$ \cite{durrett_prob}. Therefore, there exists an $\Theta(1)$ time $T_{\epsilon}$, such that $\|\bv{\pi}(t) - \bv{\pi} \|\leq \epsilon$ for all $t\geq T_{\epsilon}$ with probability $1$.  

Then, at every time $t\geq T_{\epsilon}$, we use Theorem 1 in \cite{huangneely_qlamarkovian} to obtain that: 
\begin{eqnarray}
&&g(\bv{\upsilon}^*(t), \bv{\nu}^*(t)-\bv{\theta}, t) = \phi^*_c(t) \leq VU_{\max}, 
\end{eqnarray}
where $\phi^*_c(t)$ is the optimal value of the convexified version of (\ref{eq:opt-rate}) with the empirical distributions \cite{huangneely_qlamarkovian}, the function $g(\bv{\upsilon}^*(t), \bv{\nu}^*(t)-\bv{\theta}, t)=\sum_m\pi_m(t)g(\bv{\upsilon}, \bv{\nu}-\bv{\theta})$ is introduced for  convenience, 
and $U_{\max}\triangleq U_{tot}(R_{\max}\cdot\bv{1})$.  

Consider Assumption \ref{assumption:bdd-LM} and define: 
\begin{eqnarray*}
\eta_1\triangleq \min\bigg(\sum_m\pi_m\sum_k\varrho^{m}_{k} e_{n,k}^{m},  \sum_m\pi_m(\sum_k\varrho^{m}_{k} e_{n,k}^{m}- h^m_n)\bigg). 
\end{eqnarray*}
Then, we see that for any subset $\script{I}\subseteq \script{N}$, we can construct a policy that ensures (\ref{eq:slackness1}), and ensures (\ref{eq:slackness2}) with:
\begin{eqnarray*}
\hspace{-.3in}&&\sum_m\pi_m\bigg[\sum_k\varrho^{m}_{k} e_{n,k}^{m} - \sum_k\vartheta^{m}_k\sum_{b\in\script{N}^{(o)}_n} P_{[n, b], k}^{m}\bigg]  =  \eta_1, \forall\,n\in\script{I},\\
\hspace{-.3in}&&\sum_m\pi_m\bigg[\sum_k\varrho^{m}_{k} e_{n,k}^{m} - \sum_k\vartheta^{m}_k\sum_{b\in\script{N}^{(o)}_n} P_{[n, b], k}^{m}\bigg]  =  -\eta_1, \forall\,n\notin\script{I}. 
\end{eqnarray*}
Denote $\script{I}_+$ the set of $n$'s with $\nu_n^{*}(t)-\theta_n\geq0$. Using the definition of $g(\bv{\upsilon}, \bv{\nu}, t)$, and using the above policy 
 with $\script{I}=\script{N}/\script{I}_+$, we have that for all time $t\geq T_{\epsilon}$, with probability $1$, 
\begin{eqnarray}
\hspace{-.2in}&&VU_{tot}(\bv{r}) +\eta_0\sum_{n, c} v_n^{(c)*}(t) \nonumber \\
\hspace{-.2in}&& \qquad\qquad +\eta_1\sum_{n\in\script{I}_+}(\nu_n^{*}(t)-\theta_n) -\eta_1 \sum_{n\notin\script{I}_+}(\nu_n^{*}(t)-\theta_n)   \nonumber\\
\hspace{-.2in}&&\leq  g(\bv{\upsilon}^*(t), \bv{\nu}^*(t)-\bv{\theta}, t) \leq  VU_{\max}. \label{eq:dual-bdd}
\end{eqnarray}
(\ref{eq:dual-bdd}) holds since $g(\bv{\upsilon}^*(t), \bv{\nu}^*(t)-\bv{\theta}, t)$ achieves the supremum over all actions.  
From (\ref{eq:dual-bdd}), we have that: 
\begin{eqnarray}
\sum_{n, c} v_n^{(c)*}(t)&\leq& q_d\triangleq \frac{VU_{\max} }{\eta_0}, \label{eq:data-bdd-dual}\\
 \sum_n|\nu_n^{*}(t) -\theta_n| &\leq& q_e\triangleq\frac{  VU_{\max} }{\eta_1}.\label{eq:energy-bdd-dual}
\end{eqnarray}

Denote $\gamma_q\triangleq d_{\max}\mu_{\max}+R_{\max}$ and $\gamma_e\triangleq P_{\max}+h_{\max}$, which are the maximum input rates  into any data queue or energy queue at any time.  
With (\ref{eq:data-bdd-dual}) and (\ref{eq:energy-bdd-dual}),  and the definition of $g(\bv{\upsilon}, \bv{\nu})$, we see that with probability $1$, for all $t\geq T_{\epsilon}$, 
\begin{eqnarray*}
|g_{m}(\bv{\upsilon}^*(t), \bv{\nu}^*(t) -\bv{\theta})| \leq VU_{\max} + q_d\gamma_q+q_e\gamma_e. 
\end{eqnarray*}
Therefore,   
\begin{eqnarray}
\hspace{-.2in}&&|g(\bv{\upsilon}^*(t), \bv{\nu}^*(t)-\bv{\theta} , t) - g(\bv{\upsilon}^*(t), \bv{\nu}^*(t)-\bv{\theta})|\nonumber  \\ 
\hspace{-.2in}&\leq&\sum_{m}| \pi_{m}(t)   - \pi_{m} | |g_{m}(\bv{\upsilon}^*(t), \bv{\nu}^*(t)-\bv{\theta})|\nonumber\\
\hspace{-.2in}&\leq& \sum_{m}| \pi_{m}(t)- \pi_{m}| \times (VU_{\max} + q_d\gamma_q+q_e\gamma_e)   \nonumber \\
\hspace{-.2in}&\leq&\max | \pi_{m}(t)- \pi_{m}|  \times M(VU_{\max} + q_d\gamma_q+q_e\gamma_e). \label{eq:dual-error-bdd}
\end{eqnarray}
Denote $\delta_{m}(t)\triangleq |\pi_{m}(t)  - \pi_{m}  |$ and  $\delta_{tot}\triangleq  \max_{m}\delta_{m}(t) M(VU_{\max} + q_d\gamma_q+q_e\gamma_e)$.  Using (\ref{eq:dual-error-bdd}), we see then: 
\begin{eqnarray}
\hspace{-.4in}&&g(\bv{\upsilon}^*(t), \bv{\nu}^*(t) - \bv{\theta}  )-g(\bv{\upsilon}^*, \bv{\nu}^*) \leq 2\delta_{tot}, \label{eq:distance-rate}
\end{eqnarray}
for otherwise we can use (\ref{eq:dual-error-bdd}) to get: 
\begin{eqnarray*}
&&g(\bv{\upsilon}^*, \bv{\nu}^*, t) - g(\bv{\upsilon}^*(t), \bv{\nu}^*(t)-\bv{\theta} , t)  \\
&\leq& [g(\bv{\upsilon}^*, \bv{\nu}^*) + \delta_{tot}]  - [  g(\bv{\upsilon}^*(t), \bv{\nu}^*(t) -\bv{\theta}   ) - \delta_{tot}  ]\\
&<&0. 
\end{eqnarray*}
This contradicts with the fact that $(\bv{\upsilon}^*(t), \bv{\nu}^*(t))$ is the optimal solution  of  $g(\bv{\upsilon}, \bv{\nu}-\bv{\theta} , t)$. 

Having established (\ref{eq:distance-rate}), we can now use the fact that $g(\bv{\upsilon}, \bv{\nu})$ is a polyhedral function, i.e., $g(\bv{v}^*, \bv{\nu}^*)\leq g(\bv{v}, \bv{\nu})-\rho\|(\bv{v}^*, \bv{\nu}^*)-(\bv{v}, \bv{\nu})\|$,  to obtain:  
\begin{eqnarray*}
\hspace{-.4in}&&\| (\bv{v}^*(t), \bv{\nu}^*(t))  - (\bv{v}^*, \bv{\nu}^*+\bv{\theta}) \| \\
\hspace{-.4in}&&\leq [g(\bv{\upsilon}^*(t), \bv{\nu}^*(t) - \bv{\theta}  )-g(\bv{\upsilon}^*, \bv{\nu}^*)]/\rho \\
\hspace{-.4in}&&\leq 2 \max_{m}\delta_{m}(t) M(VU_{\max} + q_d\gamma_q+q_e\gamma_e)/\rho. \nonumber
\end{eqnarray*}
Finally, we can use a similar argument as in Appendix H of \cite{huang-learning-sig-14} to show that, when $V$ is large enough such that $\frac{2}{3}\log(V)V^{-c/2}\leq1$, at time $T_L=V^c$, 
\begin{eqnarray}
\prob{\max\delta_m(t) \geq \frac{4\log(V)}{V^{c/2}} } \leq Me^{-4[\log(V)]^2}. \label{eq:prob-error-bdd}
\end{eqnarray}
Therefore, with probability at least $1-Me^{-4[\log(V)]^2}$, we have $\max\delta_{m}(t) \leq \frac{4\log(V)}{V^{c/2}}$. Thus, 
\begin{eqnarray}
\hspace{-.4in}&&\| (\bv{v}^*(t), \bv{\nu}^*(t))  - (\bv{v}^*, \bv{\nu}^*+\bv{\theta}) \| \\
\hspace{-.4in}&&\leq \frac{8\log(V)M(VU_{\max} + q_d\gamma_q+q_e\gamma_e)}{V^{c/2}  \rho} \nonumber\\
\hspace{-.4in}&&=\Theta(V^{1-\frac{c}{2}}\log(V)). 
\end{eqnarray}
This proves the lemma. 
\end{proof}

\section*{Appendix B -- Proof of Theorem \ref{theorem:lem}}
Here we prove Theorem \ref{theorem:lem}. We will use the following lemma and theorems in our analysis.  
\begin{lemma}\label{lemma:q-avgrate}
 \cite{huang-learning-sig-14} Let $Q(t)$ be the size of a single queue with dynamics $Q(t+1)=[Q(t)-\mu(t)]^++A(t)$. Suppose $0\leq \mu(t), A(t)\leq\delta_{\max}=\Theta(1)$ for all $t$ and that the queue is stable. Then,  
\begin{eqnarray}
\overline{\mu(t)} - \overline{A(t)} \leq \delta_{\max}\prob{Q(t)<\delta_{\max}}. \label{eq:excessrate}
\end{eqnarray}
Here $\overline{x(t)}\triangleq \lim_{T\rightarrow\infty}\frac{1}{T}\sum_{t=0}^{T-1}\expect{x(t)}$. $\Diamond$ 
\end{lemma}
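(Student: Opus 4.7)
The plan is to prove Lemma \ref{lemma:q-avgrate} by introducing the \emph{actual} per-slot service amount and splitting $\mu(t)$ into ``what is actually drained from the queue'' and ``what is wasted because the queue was too small.'' Concretely, define $s(t) \triangleq \min(Q(t), \mu(t))$. Then the nonlinear recursion $Q(t+1) = [Q(t)-\mu(t)]^+ + A(t)$ becomes an exact linear equality
\begin{eqnarray*}
Q(t+1) \;=\; Q(t) - s(t) + A(t),
\end{eqnarray*}
because on $\{\mu(t)\leq Q(t)\}$ we have $s(t)=\mu(t)$ and the $[\cdot]^+$ is inactive, and on $\{\mu(t)>Q(t)\}$ we have $s(t)=Q(t)$ so that $Q(t)-s(t)=0=[Q(t)-\mu(t)]^+$.

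Next I would telescope this equality from $t=0$ to $T-1$ and divide by $T$ to obtain
\begin{eqnarray*}
\frac{\expect{Q(T)}-Q(0)}{T} \;=\; \frac{1}{T}\sum_{t=0}^{T-1}\expect{A(t)} \;-\; \frac{1}{T}\sum_{t=0}^{T-1}\expect{s(t)}.
\end{eqnarray*}
Stability as stated in the lemma, together with $A(t)\leq\delta_{\max}$, forces $\expect{Q(T)}/T \to 0$ as $T\to\infty$ (otherwise the time-average queue size would diverge), so passing to the limit yields $\overline{s(t)} = \overline{A(t)}$. This is the key identity: in steady state the queue cannot lose or gain mass, so the \emph{realized} service rate exactly matches the arrival rate.

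Then I would bound the gap between the \emph{nominal} service $\mu(t)$ and the realized service $s(t)$. Write $\mu(t)-s(t) = [\mu(t)-Q(t)]^+$. On the event $\{Q(t)\geq \delta_{\max}\}$, since $\mu(t)\leq \delta_{\max}$, we have $\mu(t)-Q(t)\leq 0$ and hence $\mu(t)-s(t)=0$. On the complementary event $\{Q(t)<\delta_{\max}\}$, we simply use the trivial bound $\mu(t)-s(t)\leq \mu(t)\leq \delta_{\max}$. Together,
\begin{eqnarray*}
\mu(t) - s(t) \;\leq\; \delta_{\max}\cdot \mathbf{1}\{Q(t)<\delta_{\max}\}.
\end{eqnarray*}
Taking expectations, averaging over $t$, and letting $T\to\infty$ gives $\overline{\mu(t)} - \overline{s(t)} \leq \delta_{\max}\prob{Q(t)<\delta_{\max}}$, and combining with the identity $\overline{s(t)}=\overline{A(t)}$ yields the claim.

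The argument is conceptually short; the only mild subtlety is the limiting step, i.e., justifying $\expect{Q(T)}/T\to 0$ and the existence of the various Cesàro limits under the paper's convention that ``all limits exist with probability $1$'' (noted in the footnote in Section \ref{subsection:rate-utility}). Given that convention and the boundedness $A(t),\mu(t)\leq\delta_{\max}$, dominated convergence handles the interchange of limit and expectation, so this is not a genuine obstacle but rather the one place where care is needed.
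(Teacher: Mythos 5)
Your proof is correct. The paper does not actually prove this lemma itself---it is imported from \cite{huang-learning-sig-14} with only a citation---but your argument (introducing the realized service $s(t)=\min(Q(t),\mu(t))$, telescoping the resulting exact recursion $Q(t+1)=Q(t)-s(t)+A(t)$ to get $\overline{s(t)}=\overline{A(t)}$, and bounding the wasted service $\mu(t)-s(t)=[\mu(t)-Q(t)]^+$ by $\delta_{\max}\mathbf{1}\{Q(t)<\delta_{\max}\}$) is the standard route to this fact and every step checks out. The one step you rightly flag, $\expect{Q(T)}/T\to 0$, follows from stability in the sense of (\ref{eq:queuestable}) only because the increments are bounded: a Ces\`aro-bounded nonnegative sequence need not satisfy $a_T/T\to 0$ in general, but one with $O(1)$ increments does, since $\expect{Q(T)}\geq\epsilon T$ would force $\expect{Q(t)}=\Omega(\epsilon T)$ over an interval of length $\Omega(\epsilon T)$ and hence a divergent time average---so your parenthetical appeal to $A(t)\leq\delta_{\max}$ is exactly the right justification.
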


\begin{theorem}\label{theorem:dual-fav} \cite{huangneely_qlamarkovian} 
Let $(\bv{v}^*, \bv{\nu}^*)$ be an optimal solution of (\ref{eq:dual}). Then, $g(\bv{v}^*, \bv{\nu}^*)=\phi^*\geq VU_{tot}(\bv{r}^*)$. $\Diamond$
\end{theorem}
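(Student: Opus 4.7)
The plan is to establish the two parts of the claim separately: (i) strong duality, namely $g(\bv{v}^*, \bv{\nu}^*) = \phi^*$ where $\phi^*$ is the optimal value of the convexified primal (\ref{eq:opt-obj}); and (ii) the primal bound $\phi^* \geq V U_{tot}(\bv{r}^*)$.

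First I would set up the Lagrangian of the convexified version of (\ref{eq:opt-obj}). Introduce multipliers $\upsilon^{(c)}_n\geq 0$ for the flow-balance inequalities (\ref{eq:opt-rate}) and sign-unrestricted $\nu_n\in\mathbb{R}$ for the energy-balance equalities (\ref{eq:opt-energy}); the remaining constraints (the box constraints on $\bv{r}$, $\bv{e}^m$, the rate constraints $\sum_c \mu^{(c)}_{[n,b]}\leq \mu_{[n,b]}$, and $\bv{P}^m\in\script{P}^m$) are kept implicit in the supremum. Because these implicit constraints decouple across the state $\bv{z}_m$, maximizing the Lagrangian separates into a per-state supremum, giving exactly $g(\bv{\upsilon},\bv{\nu})=\sum_m\pi_m g_m(\bv{\upsilon},\bv{\nu})$ as in (\ref{eq:dual-function}). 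Weak duality then yields $\phi^*\leq g(\bv{\upsilon},\bv{\nu})$ for every dual-feasible pair, and in particular $\phi^*\leq g(\bv{v}^*,\bv{\nu}^*)$.

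Next I would upgrade this to equality via strong duality. After convexification (taking distributions over actions within each $\script{P}^m$), the primal becomes a concave maximization (each $U^{(c)}_n$ is concave) over a compact convex feasible set with linear equality/inequality constraints. Slater's condition follows from Assumption \ref{assumption:bdd-LM}: the randomized policy produced there satisfies (\ref{eq:opt-rate}) with strict slack $\eta_0>0$, while the strict inequality $0<\sum_m\pi_m\sum_k\varrho^m_k e^m_{n,k}<\sum_m\pi_m h^m_n$ places the energy equality's feasible set in the relative interior of the harvesting box. Standard convex duality (e.g., the Slater refined form in Bertsekas, Prop.~5.3.1) then gives zero duality gap: $g(\bv{v}^*,\bv{\nu}^*)=\phi^*$.

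Finally, I would show $\phi^*\geq V U_{tot}(\bv{r}^*)$. By definition $\bv{r}^*$ is attained by some policy satisfying (\ref{eq:queuestable}) and the per-slot energy constraint (\ref{eq:energycond}). Invoking the stationary-randomized-policy reduction for i.i.d.\ MDPs (Chapter~4 of \cite{neelynowbook}), there exists a $\bv{z}$-only stationary randomized policy whose time-averaged decisions $(\bv{r},\bv{P}^m,\bv{\mu}^m,\bv{e}^m)$ achieve the same admission rates $\bv{r}^*$. Queue stability forces the flow-balance inequality (\ref{eq:opt-rate}), and taking expectations in (\ref{eq:Edynamic}) together with the finite buffer and (\ref{eq:energycond}) forces the energy-balance equality (\ref{eq:opt-energy}) in the mean. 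Thus this policy is feasible for the convexified LP with objective $V U_{tot}(\bv{r}^*)$, giving $\phi^*\geq V U_{tot}(\bv{r}^*)$.

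The main obstacle is the third step: justifying the stationary-randomized-policy reduction under the \emph{hard} per-slot energy-availability constraint (\ref{eq:energycond}), which is not a time-average constraint. The argument I would use is to exploit the boundedness of $h_n$, finiteness of $\script{Z}$, and the freedom to set $e_n(t)\in[0,h_n(t)]$: any long-run achievable operating point admits a stationary policy whose expected per-slot consumption at each node can be made equal to its expected per-slot harvested energy while preserving the admission rates, turning the per-slot energy-availability constraint into the mean-rate equality (\ref{eq:opt-energy}) needed for primal feasibility.
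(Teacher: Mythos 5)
The paper gives no proof of this theorem at all: it is imported verbatim from \cite{huangneely_qlamarkovian}, so there is nothing in-paper to compare against line by line. Your reconstruction follows the same route as that reference: per-state decoupling of the Lagrangian to recover $g(\bv{\upsilon},\bv{\nu})=\sum_m\pi_m g_m(\bv{\upsilon},\bv{\nu})$, weak duality, zero duality gap for the convexified program (which is a concave/linear program over distributions on the finite-state action sets, with the Slater-type slack supplied by Assumption \ref{assumption:bdd-LM}), and the stationary-randomized-policy characterization of achievable operating points to get $\phi^*\geq VU_{tot}(\bv{r}^*)$. You also correctly isolate the one step that genuinely needs care, namely that (\ref{eq:energycond}) is a hard per-slot constraint while (\ref{eq:opt-energy}) is an average equality, and you resolve it in the right direction: summing (\ref{eq:Edynamic}) with $E_n(0)=0$ and $E_n(T)\geq 0$ gives average consumption $\leq$ average harvested energy, and the freedom to throttle $e_n(t)\in[0,h_n(t)]$ turns this into the equality (\ref{eq:opt-energy}), so the static problem is a relaxation of the dynamic one and its optimum dominates $VU_{tot}(\bv{r}^*)$. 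Two minor remarks: the phrase ``the finite buffer \ldots forces the energy-balance equality'' is slightly off, since the model starts from an infinite-capacity energy queue and the equality comes from the throttling argument rather than from any buffer bound; and for everything the paper actually uses in Appendix B only the inequality $g(\bv{v}^*,\bv{\nu}^*)\geq VU_{tot}(\bv{r}^*)$ is needed, which follows from weak duality plus the relaxation argument alone, so the strong-duality half is the dispensable part if one wanted a shorter proof. Overall the proposal is correct and consistent with the cited source.
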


%

\begin{theorem} \label{theorem:attraction} Suppose that the dual function $g(\bv{v}^*, \bv{\nu}^*)$  is  polyhedral with $\rho = \Theta(1)>0$, i.e., independent of $V$, and has a unique optimal $(\bv{v}^*, \bv{\nu}^*)$. Then, under the LEM algorithm without learning and augmenting, there exist $\Theta(1)$ constants $\xi$, $K$, and $D$, such that,  
\begin{eqnarray}
\script{P}(D, b)\leq \xi e^{-Kb},\label{eq:pm_ineq}
\end{eqnarray}
where $\script{P}(D, b)$ is defined as:
\begin{eqnarray}
\hspace{-.3in}&&\script{P}(D, b)\triangleq\label{eq:pm_def}\\
\hspace{-.3in}&&\,\,\,\limsup_{t\rightarrow\infty}\frac{1}{t}\sum_{\tau=0}^{t-1}\prob{\| (\bv{Q}(t), \bv{E}(t)-\bv{\theta})  - (\bv{v}^*, \bv{\nu}^*)  \|>D+b}. \nonumber
\end{eqnarray}
\end{theorem}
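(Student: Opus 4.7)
\textbf{Proof plan for Theorem \ref{theorem:attraction}.} The plan is to establish exponential attraction by a Lyapunov argument centered at the unique dual optimum $(\bv{v}^*, \bv{\nu}^*)$, combined with the polyhedral property of $g$ and a Hajek-style exponential tail bound. Define the shifted state $\bv{Y}(t) = (\bv{Q}(t), \bv{E}(t)-\bv{\theta})$, the translated Lyapunov function
\[
\Phi(t) \triangleq \tfrac{1}{2}\|\bv{Y}(t) - (\bv{v}^*, \bv{\nu}^*)\|^2,
\]
and its one-slot conditional drift $\Delta_\Phi(t) \triangleq \expect{\Phi(t+1)-\Phi(t)\,\big|\,\bv{Y}(t)}$. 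Observe that each coordinate of $\bv{Y}$ changes by at most $\gamma \triangleq \max(\gamma_q,\gamma_e) = \Theta(1)$ per slot (this follows from boundedness of the rate, admission, power, and harvesting processes), so the increment $|\Phi(t+1)-\Phi(t)|$ is at most linear in $\|\bv{Y}(t)-(\bv{v}^*,\bv{\nu}^*)\|$ plus a constant. This bounded-increment property is the first hypothesis needed for Hajek's exponential lemma.

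Next I would express $\Delta_\Phi(t)$ in a form that exposes the dual function. Expanding the square, $\Delta_\Phi(t)$ equals $B'$-type constants plus an inner product of $(\bv{Y}(t)-(\bv{v}^*,\bv{\nu}^*))$ with the net arrival/service vector. The key observation is that without augmenting or learning (so $\bv{\xi}_Q=\bv{\xi}_E=\bv{0}$, $\hat{\bv{Q}}=\bv{Q}$, $\hat{\bv{E}}=\bv{E}$), the $\mathtt{LEM}$ decisions at time $t$ \emph{minimize} the right-hand side of the drift bound in Lemma \ref{lemma:drift} with $\bv{Y}(t)-(\bv{v}^*,\bv{\nu}^*)$ playing the role of a Lagrange multiplier. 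Comparing with the stationary randomized policy attaining $g_m(\bv{v}^*, \bv{\nu}^*)$ for each state $\bv{z}_m$ (which exists by Assumption \ref{assumption:bdd-LM} and the definition of the dual), a straightforward rearrangement (identical in spirit to the drift manipulations in \cite{huangneely_dr_tac}) yields
\[
\Delta_\Phi(t) \;\leq\; B'' \;+\; \bigl[g(\bv{v}^*,\bv{\nu}^*) - g(\bv{Y}(t))\bigr],
\]
for a $\Theta(1)$ constant $B''$, where $g(\cdot)$ denotes the true dual function evaluated at the shifted queue state.

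With this drift expression in hand, the polyhedral property $g(\bv{v},\bv{\nu}) \geq g(\bv{v}^*,\bv{\nu}^*) + \rho\|(\bv{v},\bv{\nu}) - (\bv{v}^*,\bv{\nu}^*)\|$ immediately gives
\[
\Delta_\Phi(t) \;\leq\; B'' - \rho\,\|\bv{Y}(t) - (\bv{v}^*,\bv{\nu}^*)\|.
\]
Choosing $D = (B''+\eta)/\rho$ for a small $\Theta(1)$ slack $\eta>0$ ensures a strictly negative drift of magnitude at least $\eta$ whenever $\|\bv{Y}(t) - (\bv{v}^*,\bv{\nu}^*)\| > D$. Together with the bounded per-slot increment, the two standard hypotheses of Hajek's exponential attraction lemma (as applied in \cite{huangneely_dr_tac, huang-learning-sig-14}) are satisfied, giving the steady-state tail bound $\script{P}(D,b) \leq \xi e^{-Kb}$ for $\Theta(1)$ constants $\xi, K$.

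The main obstacle I anticipate is step two: cleanly identifying the $\mathtt{LEM}$ drift with the quantity $g(\bv{v}^*,\bv{\nu}^*) - g(\bv{Y}(t))$. Because $\mathtt{LEM}$ only observes the instantaneous state $\bv{z}(t)$ (not the full distribution), the ``drift'' is really the per-sample supremum $g_{\bv{z}(t)}(\bv{Y}(t))$ rather than $g(\bv{Y}(t))$, and some care is needed to take conditional expectation over $\bv{z}(t)$ (which is i.i.d.) to recover the averaged dual function, and to handle the approximate equality (rather than exact equality) between what $\mathtt{LEM}$ chooses and the Lagrangian maximizer due to the packet-dropping safeguard and the possibility that some queues are too small to fully serve. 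These are bounded-error perturbations that can be absorbed into the constant $B''$, but verifying this rigorously is the delicate part of the argument.
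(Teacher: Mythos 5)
Your plan is correct and coincides with the paper's (omitted) argument, which simply defers to Theorem 1 of \cite{huangneely_dr_tac}: a drift analysis centered at the unique dual optimum, the polyhedral property converting the dual gap into a negative drift proportional to $\|(\bv{Q}(t),\bv{E}(t)-\bv{\theta})-(\bv{v}^*,\bv{\nu}^*)\|$, and a Hajek-type exponential attraction bound, with the per-sample versus averaged dual issue handled by taking expectation over the i.i.d.\ state $\bv{z}(t)$ exactly as you anticipate. One small repair: Hajek's lemma should be applied to the norm $\|(\bv{Q}(t),\bv{E}(t)-\bv{\theta})-(\bv{v}^*,\bv{\nu}^*)\|$, which has $\Theta(1)$-bounded increments, after converting the quadratic drift bound into a $\Theta(1)$ decrease of the conditional expected norm (Lemma 2 of \cite{huangneely_dr_tac}), rather than to $\Phi$ itself, whose one-slot increments grow linearly with the distance and so do not satisfy the bounded-increment hypothesis.
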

\begin{proof} (Theorem \ref{theorem:attraction}) 
Similar to the proof of Theorem 1 in \cite{huangneely_dr_tac}. Omitted for brevity. 
\end{proof}

If a steady state distribution exists for $\bv{Q}(t)$ and $\bv{E}(t)$, then $\script{P}(D, b) = \lim_{t\rightarrow\infty}\prob{ \| (\bv{Q}(t), \bv{E}(t)-\bv{\theta})  - (\bv{v}^*, \bv{\nu}^*)  \|>D+b }$. 
We now present the proof for Theorem \ref{theorem:lem}. We carry out the proof in the order of Part (c) - Part (a) - Part (b). 
Our analysis is very different from those in \cite{huang-learning-sig-14} and  \cite{huangneely-energy-ton13}, due to the energy underflow constraint (\ref{eq:energycond}) and learning. Specifically, in \cite{huangneely-energy-ton13}, the energy availability is shown to be never active; in our case, we introduce packet dropping to ensure zero energy outage.  


\begin{proof}   (Theorem \ref{theorem:lem} - Part (c) \textbf{Utility}) 
To start,   first note that the dropping mechanism  changes the effective energy queue evolution to: 
\begin{eqnarray}
\hspace{-.3in}&&E_n(t+1) =  \big(E_n(t) - \sum_{b\in\script{N}^{(o)}_n}P_{[n, b]}(t)\big)^+ +e_n(t). \label{eq:Edynamic2} 
\end{eqnarray}
It is important to note that (\ref{eq:drift3}) still holds under (\ref{eq:Edynamic2}). Thus, by comparing the RHS of (\ref{eq:drift3}) and LEM, 
we get that:  
\begin{eqnarray}
\hspace{-.3in} &&\Delta(t) + \Delta_A(t)-V\expect{\sum_{n, c} U^{(c)}_n(R^{(c)}_n(t))\left.|\right. \bv{Y}(t)} \label{eq:drift4}\\
\hspace{-.3in} &&\leq B - V\expect{\sum_{n, c} U^{(c)}_n(R^{(c), alt}_n(t))\left.|\right. \bv{Y}(t)} \nonumber\\
\hspace{-.3in} &&\quad+\sum_{n\in\script{N}}(\hat{E}_n(t)-\theta_n)\expect{e^{alt}_n(t) -\sum_{b\in\script{N}^{(o)}_n} P_{[n, b]}^{alt}(t)\left.|\right. \bv{Y}(t)}  \nonumber\\
\hspace{-.3in} && \quad   - \expect{ \sum_{n, c} \hat{Q}^{(c)}_n(t)\big[ \sum_{b\in\script{N}^{(o)}_n}\mu_{[n,b]}^{(c), alt} -  \sum_{b\in\script{N}^{(in)}_n}\mu_{[a, n]}^{(c), alt} \nonumber\\
\hspace{-.3in} &&\qquad\qquad\qquad\qquad\qquad\qquad\qquad\quad  - R^{(c), alt}_n(t)\big] \left.|\right. \bv{Y}(t)}. \nonumber
\end{eqnarray}
Here we have rearranged the terms and  ``$alt$'' stands for ``alternative,'' i.e., the drift expression under LEM holds when we plug any alternative control policy into the RHS. 

By comparing $\mathtt{LEM}$ and the dual function $g(\bv{v}, \bv{\nu})$, we see that the RHS of 
(\ref{eq:drift4}) under $\mathtt{LEM}$ indeed equals to $B-g(\hat{\bv{Q}}(t), \hat{\bv{E}}(t)-\bv{\theta})$. Thus, applying Theorem \ref{theorem:dual-fav}, we obtain: 
 \begin{eqnarray}
\hspace{-.3in} && \Delta(t) +\Delta_A(t) -V\expect{ \sum_{n, c} U^{(c)}_n(R^{(c)}_n(t)) \left.|\right.\bv{Y}(t)}  \nonumber\\
\hspace{-.3in} &\leq& B-g(\hat{\bv{Q}}(t), \hat{\bv{E}}(t)-\bv{\theta})\leq B - VU_{tot}(\bv{r}^*). \nonumber
\end{eqnarray}
Taking expectations over $\bv{Y}(t)$ and summing the above over $t=0, ..., T-1$, we have:
\begin{eqnarray*}
\hspace{-.3in} &&\expect{L(T)-L(0)} + \sum_{t=0}^{T-1} \expect{ \Delta_A(t)} \\
\hspace{-.3in} &&- V\sum_{t=0}^{T-1} \expect{ \sum_{n, c} U^{(c)}_n(R^{(c)}_n(t))} \leq T B - T VU_{tot}(\bv{r}^*). 
\end{eqnarray*}
Rearranging  terms,  using the fact that $L(t)\geq0$ and $L(0)=0$, and dividing both sides by $VT$, 
we get:
\begin{eqnarray*}
\hspace{-.3in} && \frac{1}{T}\sum_{t=0}^{T-1} \expect{ \sum_{n, c} U^{(c)}_n(R^{(c)}_n(t))} \\
\hspace{-.3in} && \qquad\qquad\geq U_{tot}(\bv{r}^*) -  \frac{B}{V} - \frac{1}{VT}\sum_{t=0}^{T-1} \expect{ \Delta_A(t)}. 
\end{eqnarray*}
Using Jensen's inequality and taking a limit as $T\rightarrow\infty$,  
\begin{eqnarray*}
\hspace{-.3in} && \sum_{n, c} U^{(c)}_n(\overline{r}^{nc})\geq U_{tot}(\bv{r}^*) - \frac{B}{V} - \lim_{T\rightarrow\infty}\frac{1}{VT}\sum_{t=0}^{T-1} \expect{ \Delta_A(t)}. 
\end{eqnarray*}

To prove (\ref{eq:utility-perf}), it remains to show that  (i) the last term, $\lim_{T\rightarrow\infty}\frac{1}{VT}\sum_{t=0}^{T-1} \expect{ \Delta_A(t)}=O(1/V)$, and (ii) the fraction of time dropping  happens is $O(1/V)$. 

We start with (i).  Recall that $\Delta_A(t)$ is defined as follows:  
\begin{eqnarray}
\hspace{-.4in}&&\Delta_A(t)\triangleq - \expect{\sum_n\xi_{Q,n}^{(c)} [  \sum_{b\in\script{N}^{(o)}_n}\mu_{[n, b]}^{(c)}(t) \label{eq:augmenting-recall}\\
\hspace{-.4in}&&\qquad\qquad\qquad\quad\quad\quad - \sum_{b\in\script{N}^{(in)}_n}\mu_{[a, n]}^{(c)}(t)  - R_n^{(c)}(t)  ]   \left.|\right. \bv{Y}(t)}\nonumber\\
\hspace{-.4in}&&\qquad \quad\quad - \expect{\sum_n\xi_{E, n} [ \sum_{b\in\script{N}^{(o)}_n} P_{[n, b]}(t) - e_n(t) ]   \left.|\right. \bv{Y}(t)}. \nonumber
\end{eqnarray}
Notice that we only need to consider $T_L\leq t\leq T$, since $\bv{\xi}_Q=\bv{\xi}_E=\bv{0}$ before time $T_L$. Using Lemma \ref{lemma:beta-conv}, we see that with probability $1- O(\frac{1}{V^{4\log(V)}})$, (\ref{eq:approx-guess}) holds at time $t=T_L$. This implies that when $V$ is large, with probability $1- O(\frac{1}{V^{4\log(V)}})$, 
\begin{eqnarray}
\hspace{-.4in}&&v_{n}^{(c)*}  - \frac{3}{2}V^{1-\frac{c}{2}}\log(V)^2 \leq  \xi_{Q,n}^{(c)} \label{eq:guess-bdd1}\\
\hspace{-.4in}&&\qquad \qquad\qquad\qquad\quad\leq  v_{n}^{(c)*}  - \frac{1}{2}V^{1-\frac{c}{2}}\log(V)^2,\,\,\forall\,\,n, c \nonumber \\
\hspace{-.4in}&&\nu_n^* +\theta_n   - \frac{3}{2}V^{1-\frac{c}{2}}\log(V)^2 \leq  \xi_{E, n}\label{eq:guess-bdd2} \\
\hspace{-.4in}&&\qquad \qquad\qquad\qquad\quad  \leq \nu_n^* +\theta_n   - \frac{1}{2}V^{1-\frac{c}{2}}\log(V)^2 ,\,\,\forall\,\,n. \nonumber
\end{eqnarray}
Combined with (\ref{eq:pm_ineq}) and the definitions of $\hat{\bv{Q}}(t)$ and $\hat{\bv{E}}(t)$, they   imply that, with probability $1- O(\frac{1}{V^{4\log(V)}})$,  in steady state: 
\begin{eqnarray*}
\prob{ Q_n^{(c)}(t) \leq \frac{1}{2}V^{1-\frac{c}{2}}\log(V)^2} \leq \xi e^{-K(\frac{1}{2}V^{1-\frac{c}{2}}\log(V)^2 - D)} \\
\prob{ E_n(t) \leq \frac{1}{2}V^{1-\frac{c}{2}}\log(V)^2} \leq \xi e^{-K(\frac{1}{2}V^{1-\frac{c}{2}}\log(V)^2 - D)}.  
\end{eqnarray*}
Thus, for a sufficiently large $V$ such that $\frac{1}{2}V^{1-\frac{c}{2}}\log(V)^2\geq D+P_{\max}+\mu_{\max} + \log(V)^2/K$, one has: 
\begin{eqnarray}
&&\prob{ Q_n^{(c)}(t) \leq \mu_{\max}} \leq \xi V^{-\log(V)}, \label{eq:rare-q} \\
&&\prob{ E_n(t) \leq P_{\max}} \leq \xi V^{-\log(V)}.  \label{eq:rare-e}
\end{eqnarray}
Using Lemma \ref{lemma:q-avgrate}, we conclude then: 
\begin{eqnarray}
\hspace{-.2in}  \sum_{b\in\script{N}^{(o)}_n}\overline{\mu_{[n, b]}^{(c)}(t)}  - \sum_{b\in\script{N}^{(in)}_n}\overline{\mu_{[a, n]}^{(c)}(t)}  - \overline{R_n^{(c)}(t)} &=&O(\frac{1}{V^2}) \nonumber\\
\hspace{-.2in} \sum_{b\in\script{N}^{(o)}_n} \overline{P_{[n, b]}(t)} - \overline{e_n(t)} &=&  O(\frac{1}{V^2}). \nonumber
\end{eqnarray}
Together with (\ref{eq:guess-bdd1}) and (\ref{eq:guess-bdd2}), and that $(\bv{v}^*, \bv{\nu}^*+\bv{\theta})=\Theta(V\log(V))$, the above shows that with probability $1- O(\frac{1}{V^{4\log(V)}})$, the last term $\lim_{T\rightarrow\infty}\frac{1}{VT}\sum_{t=0}^{T-1} \expect{ \Delta_A(t)}=O(1/V)$. 

It remains  to show that dropping happens rarely. We first use (\ref{eq:rare-e}) to conclude that the fraction of time the energy queue does not have enough power after $t=T_L$ is $O(1/V^2)$.  
To also see that no energy outage occurs before $T_L$, note that before $T_L$, we have $Q_n^{(c)}(t)=O(V^{c})$. This is so since a queue can increase by at most $\gamma_q=R_{\max}+d_{\max}\mu_{\max} = \Theta(1)$ in every time slot. Thus, for any link, we have $W_{[n, b]}^{(c)}(t) \leq \beta V$. Since $\theta_n=V\log(V)$ for all $n$, we see that if $E_n(t)\leq P_{\max}$, then $E_n(t)-\theta_n < -\kappa\beta V$. Now, suppose at time $t$ the optimal power allocation $\bv{P}$  has a nonzero component $P_{[n, b]}$ for some node $n$ with $E_n(t)<P_{\max}$. Then, we can construct  vector $\hat{\bv{P}}$  by setting only $P_{[n, b]}$ to zero. Doing so, we get: 
\begin{eqnarray*}
\hspace{-.3in} &&\quad G(\bv{P}) - G(\hat{\bv{P}}) \\
\hspace{-.3in} && =\sum_{n}\sum_{b\in\script{N}^{(o)}_n}\big[\mu_{[n, b]}(\bv{S}(t), \bv{P}) - \mu_{[n, b]}(\bv{S}(t), \hat{\bv{P}})\big]W_{[n, b]}(t) \\
\hspace{-.3in} &&\qquad\qquad\qquad\qquad\qquad\qquad\qquad+(E_n(t)-\theta_n)P_{[n, b]}\nonumber\\
\hspace{-.3in} && \leq \big(\mu_{[n, b]}(\bv{S}(t), \bv{P}) - \mu_{[n, b]}(\bv{S}(t), \hat{\bv{P}})\big)W_{[n, b]}(t)   \\
\hspace{-.3in} &&\qquad\qquad\qquad\qquad\qquad\qquad\qquad +(E_n(t)-\theta_n)P_{[n, b]}\nonumber\\
\hspace{-.3in} &&<  \beta V\kappa P_{[n, b]} -\kappa\beta VP_{[n, b]} = 0. 
\end{eqnarray*}
Here the last two inequalities follow from the properties of $\bv{\mu}(t)$. This contradicts with the fact that $\bv{P}$ is the optimal vector at time $t$.  
Thus, no energy outage will occur before time $T_L$. This establishes (ii). 

Now note that  packet dropping happens only when $E_n(t)<P_{\max}$. Thus, using the results from above, we conclude that the average packet dropping rate is also $O(1/V^2)$. Since the utility functions have maximum derivative $\beta=\Theta(1)$, this implies that the utility loss due to packet dropping is $O(1/V^2)$, and  completes the proof for   utility.

(Part (a) - \textbf{Queue}) We now prove the queueing results. Using (\ref{eq:pm_ineq}), (\ref{eq:guess-bdd1}), and (\ref{eq:guess-bdd2}), we see that in steady state,  
\begin{eqnarray}
\hspace{-.4in}&&\prob{ Q_n^{(c)}(t) \geq \frac{3}{2}V^{1-\frac{c}{2}}\log(V)^2+D +b}  \leq \xi e^{-Kb}  \\
\hspace{-.4in}&&\prob{ E_n(t) \geq \frac{3}{2}V^{1-\frac{c}{2}}\log(V)^2+D+b} \leq \xi e^{-Kb}.  
\end{eqnarray}
Note that these are exactly (\ref{eq:queue-prob-bdd}) and (\ref{eq:energy-prob-bdd}). From these results, we see that  (\ref{eq:data-queue-bound}) and (\ref{eq:energy-queue-bound}) follow.

(Part (b) - \textbf{Delay}) To prove Part (b), we note that LEM is indeed a Lyapunov drift-based algorithm with the LIFO queue discipline (learning happens  only at $t=T_L$). Thus, its delay performance follows from Theorem 4 in \cite{huang-lifo-ton}. 
 \end{proof}

\section*{Appendix C -- Proof of Theorem \ref{theorem:lem-conv}}
Here we prove Theorem \ref{theorem:lem-conv}. 
\begin{proof} (Theorem \ref{theorem:lem-conv})
First, using Lemma \ref{lemma:beta-conv}, we see that at time $t=T_L=V^c$, with probability $1-O(\frac{1}{V^{\log(V)}})$, we have:
\begin{eqnarray}
\| (\bv{v}(t), \bv{\nu}(t))  - (\bv{v}^*, \bv{\nu}^*+\bv{\theta})  \| \leq \Theta(V^{1-\frac{c}{2}}\log(V)). 
\end{eqnarray}
Using the definitions of $\hat{\bv{Q}}(t)$ and $\hat{\bv{E}}(t)$, this implies that: 
\begin{eqnarray}
\| (\hat{\bv{Q}}(t), \hat{\bv{E}}(t))   - (\bv{v}^*, \bv{\nu}^*+\bv{\theta})  \| \leq \Theta(V^{1-\frac{c}{2}}\log(V)^2). 
\end{eqnarray}
Then, note that for all $t>T_L$, LEM is the same as a pure drift-based control policy. Hence, using Lemma 2 in \cite{huangneely_dr_tac},  we have that whenever $ \| (\hat{\bv{Q}}(t), \hat{\bv{E}}(t))   - (\bv{v}^*, \bv{\nu}^*+\bv{\theta})  \| \geq D$, where $D$ is defined in Theorem \ref{theorem:attraction},  
\begin{eqnarray}
\hspace{-.4in}&&\expect{\| (\hat{\bv{Q}}(t+1), \hat{\bv{E}}(t+1))   - (\bv{v}^*, \bv{\nu}^*+\bv{\theta})  \| \left.|\right. \hat{\bv{Q}}(t), \hat{\bv{E}}(t)  } \\
\hspace{-.4in}&&\qquad\qquad\qquad\qquad  \leq \| (\hat{\bv{Q}}(t), \hat{\bv{E}}(t))   - (\bv{v}^*, \bv{\nu}^*+\bv{\theta})  \| - \epsilon_0, \nonumber
\end{eqnarray}
for some $\epsilon_0=\Theta(1)$. Thus, applying Theorem 4 in \cite{huang-learning-sig-14}, we get that the expected time it takes for $(\hat{\bv{Q}}(t), \hat{\bv{E}}(t))$ to get into within $D$ distance of $(\bv{v}^*, \bv{\nu}^*+\bv{\theta})$ is no more than $\Theta((V^{1-\frac{c}{2}}\log(V)^2-D)/\epsilon_0)$. 

Combining this with the fact that the learning time is $T_L=V^c$, we have: 
\begin{eqnarray}
\expect{T_{D}} &=& \Theta((V^{1-\frac{c}{2}}\log(V)^2-D)/\epsilon_0 + V^c).
\end{eqnarray}
This completes the proof of Theorem \ref{theorem:lem-conv}. 
\end{proof}

\end{document}